\documentclass[12pt]{article}

\usepackage[latin1]{inputenc}
\usepackage[english]{babel}
\usepackage[T1]{fontenc}
\usepackage{lmodern}
\usepackage{graphicx, xcolor, mathrsfs, ulem}
\usepackage{amsfonts}
\usepackage{latexsym}
\usepackage{stmaryrd}
\usepackage[nice]{nicefrac}
\usepackage{amsmath,amsthm,amssymb}
\usepackage{enumerate}
\usepackage[colorlinks=true,citecolor=black,linkcolor=black,urlcolor=blue]{hyperref}
\usepackage[top=3cm, bottom=3cm, left=2.5cm , right=2.5cm]{geometry}
\usepackage{caption}
\usepackage{xifthen} 
\usepackage{wrapfig}
\usepackage[numbers, square]{natbib}
\usepackage{comment}
\usepackage{float}
\usepackage{ifthen}

\setlength{\parskip}{.5\baselineskip} \setlength{\parindent}{0pt}

\usepackage{caption}
\captionsetup{font=small}

\newtheorem{thm}{Theorem}

\newtheorem{lem}{Lemma}
\newtheorem{clm}{Claim}
\newtheorem{prop}{Proposition}
\newtheorem{cor}{Corollary}

\def\indi{\mathrm{\hspace{0.2em}l\hspace{-0.55em}1}}

\def\supp{{\rm Supp}}

\newcommand{\R}{\mathbb R}

\newcommand{\E}[2][]{\ensuremath{\mathbb{E}_{#1} \left[#2 \right]}}

\newcommand{\Prob}[2][]{\ensuremath{\mathbb{P}_{#1} \left(#2 \right)}}

\newcommand{\eps}{\varepsilon}

\newcommand{\St}[2]{\mathrm{st}_{#2}(#1)}
\newcommand{\Lk}[2]{\mathrm{lk}_{#2}(#1)}

\newcommand{\Ac}[1]{\mathcal{A}_{#1}}

\newcommand{\w}{\omega}

\newcommand{\cCd}{\mathcal {C}_{d-1}}

\newcommand{\dd}{\mathrm d}

\newcommand{\crit}{\mathrm cr}

\newcommand{\C}[3]{
\ifthenelse{\isempty{#3}}
{\ifthenelse{\isempty{#2}}
{{\mathcal K}_{#1}} 
{{\mathcal K}_{#1} \left(#2 \right)}
} 
{\ifthenelse{\isempty{#2}}
{{\mathcal K}_{#1}^{(#3)}} 
{{\mathcal K}^{(#3)} \left(#2 \right)}
}
}

\newboolean{finite_case} 
\setboolean{finite_case}{false} 

\newboolean{graphs} 
\setboolean{graphs}{false}   

\newboolean{wrrt} 
\setboolean{wrrt}{true}

\title{High-dimensional bootstrap processes  in evolving simplicial complexes}
\author{Nikolaos Fountoulakis\thanks{School of Mathematics, University of Birmingham, Edgbaston, Birmingham, UK. Supported by the EPSRC grant EP/P026729/1. \newline  E-mail: \texttt{\{n.fountoulakis, m.j.przykucki\}@bham.ac.uk}.}\ \ and Micha{\l} Przykucki\footnotemark[1]}

\begin{document}
\maketitle

\begin{abstract}
We study bootstrap percolation processes on random simplicial complexes of some fixed dimension $d \geq 3$. Starting from a single simplex of dimension $d$, we build our complex dynamically in the following fashion. We introduce new vertices one by one, all equipped with a random weight from a fixed distribution $\mu$. The newly arriving vertex selects an existing $(d-1)$-dimensional face at random, with probability proportional to some positive and symmetric function $f$ of the weights of its vertices, and attaches to it by forming a $d$-dimensional simplex. After a complex on $n$ vertices is constructed, we infect every vertex independently at random with some probability $p = p(n)$. Then, in consecutive rounds, we infect every healthy vertex the neighbourhood of which contains at least $r$ disjoint $(k-1)$-dimensional, fully infected faces. Using a reduction to the generalised P\'olya urn schemes, we determine the value of critical probability $p_c = p_c (n; \mu, f)$, such that if $p \gg p_c$ then, with probability tending to 1 as $n \to \infty$, the infection spreads to the whole vertex set of the complex, while if $p \ll p_c$ then the infection process stops with healthy vertices remaining in the complex.
\end{abstract}

{\small {\bf Keywords:} simplicial complexes, bootstrap percolation, preferential attachment, P\'olya urns, phase transitions}

{\small {\bf AMS Subject Classification 2010}: 05C80, 90B15, 60K37} 

\section{Introduction}
\normalem

The development of network science during the last 25 years has led to the development of a number 
of models whose aim is to describe the evolution of complex networks.  
This term broadly refers to autonomous networks that emerge as a result of collective human activity 
or natural processes. 
One of the most significant classes of models that allow us to understand the evolution of the real-world networks are the preferential attachment models. There,  nodes arrive in the network one by one and connect to some of the existing nodes with probability proportional to their popularity. The latter is usually quantified as an increasing function of their degree. This leads to a mathematical explanation of the \emph{rich-get-richer} phenomenon, as popular vertices with time tend to accumulate many more neighbours.

Preferential attachment models have been originally introduced in biology (see the work of Yule~\cite{Yule} on the evolution of species). Also, in linguistics Simon~\cite{Simon} introduced them as a tool to describe the statistics of language. Much later, Barab\'asi and Albert~\cite{Barabasi509} suggested that such mechanisms could be used as a model of complex networks. Their model was shortly afterwards defined rigorously by Bollob\'as, Riordan, Spencer and Tusn\'ady~\cite{bollobaspreferential}, who rigorously proved the emergence of the predicted effects in such graphs.

Among the phenomenona that we can observe in the preferential attachment random graph models, and which can also be observed in complex networks, are  degree distributions with tails following a power law (we call such networks \emph{scale free}), as well as the logarithmic or doubly logarithmic diameter, or the typical distance between two random vertices (we say that such networks form \emph{small} or~\emph{ultra-small} worlds, respectively). See, for example, \cite{vanderhofstad2016} for a detailed exposition of results on these models.

The simplest case of a preferential attachment model is that of the \emph{plane-oriented random recursive tree} introduced by Szyma\'{n}ski~\cite{szymanski87}. There, one builds a tree, where vertices arrive sequentially 
and become connected to one of the existing vertices selected with probability proportional to their degree. 
The limiting (as the number of vertices increases) degree distribution in this model was determined by Kuba and Panholzer~\cite{AllPan2007} in great detail.
We remark that if the neighbouring vertex is selected uniformly, one gets the well-known \emph{random recursive tree} model. 

Soon after the introduction of the Barab\'asi-Albert model, 
Bianconi and Barab\'asi~\cite{bianconibarabasi2001} introduced a generalisation of it. Here, every vertex 
is equipped with an independent random \emph{fitness} sampled from a given distribution, and then newly arriving vertices connect to an old vertex $v$ with probability proportional to the degree of $v$ \emph{multiplied} by its fitness. Depending on the properties of the fitness distribution, this generalisation leads to interesting new phenomena that do not occur in the Barab\'asi-Albert model. As observed in~\cite{bianconibarabasi2001}, and proved rigorously by Borgs et al.~\cite{Borgs2007} and later by Dereich and Ortgiese~\cite{dereich_ortgiese_2014}, \emph{condensation} can occur in the Bianconi-Barab\'asi model, which is the phenomenon when a vanishing fraction of vertices with very high fitness is incident to a positive fraction of the total number of edges in the graph.

Rudas, T\'oth and Valk\'o~\cite{rudas} introduced the following generalisation of the Bianconi-Barab\'asi model. Instead of having an attachment law based on a multiplicative fitness attached to every vertex, the newly arriving vertex connects to an old vertex $v$ with probability proportional to some increasing function of the degree of $v$. A more comprehensive overview of the other generalisations and variations of the preferential attachment model can be found in~\cite{vanderhofstad2016} and~\cite{bhamidi}.

\subsubsection*{Higher dimensional models: simplicial complexes}

In the context of \emph{simplicial complexes}, the above models result in  \emph{1-dimensional} structures, which only represent information about pairs of participating vertices. Models involving higher dimensional interactions (between larger sets of vertices), and thus encoding more complexity, are less well-studied. 

An \emph{(abstract) simplicial complex} $\C{}{V}{}$ on a ground set $V$ is a set of subsets of $V$ that is downwards closed: for any set $\sigma \in \C{}{V}{}$, if $\sigma' \subseteq \sigma$, then $\sigma' \in 
\C{}{}{}$ too. 
We call the elements of $V$ the vertices of $\C{}{V}{}$.
Any $\sigma \in \C{}{V}{}$ is called a \emph{face}, and we say that $\sigma$ has \emph{dimension} $s$ if 
$\sigma$ contains $s+1$ elements - we also call it an $s$-\emph{face} or an $s$-\emph{simplex}.  
For $s\in \mathbb N_0 := \{0,1,2, \ldots\}$, we let 
$\C{}{V}{s}$ denote the subset of $\C{}{V}{}$ consisting of all elements of dimension $s$.
The \emph{dimension} of $\C{}{V}{}$ is defined to be the maximum $s$ such that $\C{}{V}{s}$ is non-empty. 
\\\\
In this paper we study the so-called \emph{bootstrap percolation processes} on (random) simplicial complexes, which we now introduce.

\subsubsection*{A bootstrap process on simplicial complexes} 

Bootstrap processes on graphs were introduced by Chalupa, Leath and Reich in 1979~\cite{ar:CLR} as a 
simplified model that describes magnetization in crystallic structures. 
In the classical setting, the bootstrap process is an infection process on the vertex set of a graph $G=(V,E)$ 
which may be either finite or infinite. Each vertex in $V$ has one of two possible states: it is either \emph{infected} or
\emph{healthy}. A bootstrap process with \emph{infection threshold} $r$ is a process that evolves in rounds. 
Let $A_t\subseteq V$ be the subset of $V$ consisting of all those vertices that are infected after step $t$. 
Then $A_{t+1}$ consists of $A_t$ together with those vertices in $V \setminus A_t$ which 
have at least $r$ neighbours in $A_t$. The process $(A_t)_{t\geq 0}$ is non-decreasing and we say that 
it \emph{percolates}, if $\overline {A} = \cup_{t\geq 0} A_t = V$.   

In this paper, we shall consider a generalisation of the above process to simplicial complexes. 
Let $\C{}{V}{}$ be a simplicial complex of dimension $d$ with $V$ as its ground set. 
For  natural numbers $r$ and $k$ with $rk \leq d$, we will define the \emph{$(r,k)$-bootstrap process on $\C{}{V}{}$} 
as follows. 
In this setting too, the members of the ground set have two states: they are infected or healthy. 
We call a face $\sigma \in \C{}{V}{k}$ \emph{critical with respect to $A\subseteq V$} 
if all but one of its members are in $A$. 
If $A \subseteq V$ is a subset of active vertices, we let ${\mathcal F}_{\crit}^{(k)} (A)$ be the set of all critical $k$-faces with respect to $A$. We call these faces \emph{$A$-critical}. 

Let $A_t$ be the set 
of infected vertices after $t$ rounds. In round $t+1$ we set,  
\begin{align*}
A_{t+1} :=A_t \cup \{ v \in V \setminus A_t \ : \ & \exists \ \sigma_1,\ldots, \sigma_r \in  {\mathcal F}_{\crit}^{(k)} (A_t),  \\
& \{v\} = \sigma_i \cap \sigma_j \mbox{ for all } 1 \leq i < j \leq r  \}.
\end{align*}
In other words, if $v$ is healthy after $t$ rounds, it becomes infected in the $(t+1)$th round if it completes the infection of at least $r$ $A_t$-critical 
faces, which are pairwise disjoint apart from containing~$v$.

For a subset $A \subseteq V$ we again set $\overline {A} = \cup_{t=0}^{\infty} A_t$, for $A_0 = A$. Analogously to the bootstrap processes on graphs, we say that 
\emph{$A$ percolates}, if $\overline{A} = V$.

There are other models of high-dimensional bootstrap processes on hypergraphs - see for example~\cite{ar:KangKochMakai2015,ar:MorNo2018}. 

\subsubsection*{Our model: inhomogeneous dynamic simplicial complexes}

A model of randomly evolving simplicial complexes are Random Apollonian Networks, first introduced
by Andrade et al.~\cite{apollonianinitial} and Doye and Massen~\cite{DoyeMassen}, independently. This model (in dimension $d \geq 3$) proceeds recursively as follows. We begin with a $(d-1)$-dimensional complex, and at each step we select a $(d-1)$-face $\sigma$ uniformly at random. Then, a new vertex $v$ arrives and all $(d-1)$-faces containing $v$ and any of the $(d-1)$-element subsets of $\sigma$ are added to the complex, after which $\sigma$ is removed. It is easy to see that the probability of the degree of a vertex $w$ growing as a result of one of the faces containing $w$ being selected is proportional to its degree and this model gives rise to a \emph{preferential attachment mechanism}. 
Kolossv\'ary et al. \cite{apollonian_hungarians_1} and Frieze and Tsourakakis \cite{apollonian_frieze_2} determined the degree distribution of this model, showing that it gives rise to a power law with exponent $\frac{2d-3}{d-2} = 2 + \frac{1}{d-2}$. The same model has been considered under the name \emph{random stack-triangulations} by Albenque and Marckert in \cite{albenque2008}. 

In this paper, we consider a general model of \emph{inhomogeneous} simplicial complexes inspired by the above models.
The construction of our random simplicial complexes relies on a \emph{fitness function} $f: \R^d \to \R$ and a probability measure $\mu$ on $(0, 1]$ that has finite support.  To define our model, let us first introduce some notation. For $0 \leq s \leq d-1$, let ${\mathcal C}_s := \{(x_0, \ldots, x_{s}): 0 < x_0 \leq \ldots \leq x_{s} \leq 1 \}$. To each vertex $v$ of the simplicial complex, we associate a positive \emph{weight} $w_v$. For a $(d-1)$-face  $\sigma= \{i_0,\ldots, i_{d-1}\}$, by listing weights in non-decreasing order, weights induce a unique face \emph{type}
 $\omega(\sigma):= (w_{i_0},\ldots, w_{i_{d-1}}) \in \cCd$. 
 
Throughout this work, we make the following assumption:
\begin{itemize}
\item [\textbf{A1}.] $f$ is positive, symmetric and bounded.
\end{itemize}
For $x \in \cCd$ we call $f(x)$ the \emph{fitness} of $x$. As $f$ is symmetric, this notion extends in the obvious way to faces: we write  $f(\sigma)$ for $f(\omega(\sigma))$.
\\\\

We can now define a process of simplicial complexes 
$\left(\C{n}{}{}\right)_{n \geq 0}$ (of fixed dimension $d \geq 3$).
We will consider two different versions of the model: model \textbf{A} and model \textbf{B}. The dynamics of the process is as follows: 
 First, let $\C{0}{}{}$ be an arbitrary $d$-dimensional simplicial complex, with vertex set $V_0 \subseteq -\mathbb N$ and each vertex assigned a fixed positive weight. Set $\Ac{0} := \C{0}{}{d-1}$; we will call the faces in the consecutive sets $\Ac{t}$ \emph{active faces}.
Then, recursively for $n \geq 0$:
 \begin{itemize} 
 \item [(i)] Define the random empirical measure 
 \begin{align} \label{def:pin} \Pi_{n} = \sum_{\sigma \in \Ac{n}}\delta_{\w (\sigma)} \end{align} on $\cCd$ and the random finite discrete probability distribution on $\Ac{n}$:
 \[
 \hat{\Pi}_n = \frac{\sum_{\sigma \in \Ac{n}} f(\sigma) \delta_{\sigma}} {\int_{\cCd} f (x)  \Pi_n (\dd x)}.
 \]
\item [(ii)] Select a face $\sigma' \in \Ac{n}$ according to the distribution $\hat{\Pi}_n$. 

 \item [(iii)] The simplex $\sigma^* := \sigma' \cup \{n+1\}$ is added to $\C{n}{}{}$: letting $2^{\sigma^*}$ denote
 the powerset of $\sigma^*$, we set $\C{n+1}{}{} : = \C{n}{}{} \cup 2^{\sigma^*}$ and $V_{n+1} = V_n \cup \{n+1\}$ and say that face $\sigma'$ has been \emph{subdivided}. Vertex $n+1$ is assigned weight $w_{n+1}$ drawn according to the distribution $\mu$ independently of the history of the process.
 
 \item [(iv)] In Model \textbf{A}
  we set $\Ac{n+1} := \Ac{n} \cup (2^{\sigma^*})^{(d-1)}$; while in Model \textbf{B}, 
  we set $\Ac{n+1} := ( \Ac{n} \cup (2^{\sigma^*})^{d-1} )\setminus \{\sigma'\}$.
 \end{itemize}

 Note that by construction, only the active faces $\sigma \in \C{n}{}{d-1}$ with $\sigma \in \Ac{n}$ can be subdivided at time $n+1$.
\medskip 

In~\cite{ar:deg_distr2019}, the distribution of the degrees is considered in both models. There it is shown that 
when $d\geq 3$ its tails are bounded from below by a power law. 

We define
\[
Z(S_t) := \sum_{\sigma \in \mathcal{A}_{t}} f(\sigma) = \int_{\cCd} f (x)  \Pi_n (\dd x).
\]
We let $\Prob[\C{0}{}{}]{\cdot }$ and $\E[\C{0}{}{}]{\cdot}$ denote the probability measure of the process 
and the corresponding expectation 
operator when $\C{0}{}{}$ is the initial simplicial complex. 
For $n \in \mathbb{N}$, we let $\Prob[\C{0}{}{},n]{\cdot }$ denote the probability measure  of the above process after $n$ steps. We observe that the study of 
Model \textbf{B} is additionally motivated by the fact that it gives rise to a simplicial complex the 
geometric realisation of which is a topological manifold.
 
\subsubsection*{Some more notation}
Another notion that will be at the core of our results is that of the \emph{star} of a vertex. 
For $v\in V$, the \emph{star} of $v$ in a simplicial complex $\C{}{}{}$, which we denote by $\St{v}{\C{}{}{}}$, is the
 set of those $(d-1)$-faces which contain $v$. The $\emph{link}$ of a vertex $v$ in $\C{}{}{}$, denoted by $\Lk{v}{\C{}{}{}}$, is the simplicial complex obtained from $\St{v}{\C{}{}{}}$ by removing $v$ from the simplices in $\St{v}{\C{}{}{}}$, i.e., $\Lk{v}{\C{}{}{}} = \{ \sigma \setminus \{v\} : \sigma \in \St{v}{\C{}{}{}} \}$.
 
We shall say that an event occurs \emph{almost surely} (a.s.), if it occurs with probability 1. We further say that a sequence of events $\{A_n\}$ occurs \emph{with high probability} (w.h.p.) if the probability of $A_n$ tends to 1 as $n \to \infty$.

Finally, we point out that we use the interval notation for real numbers in the context of discrete time. 
So, for example, in that context $(a,b)$ denotes the set of natural numbers between $a$ and $b$. 
Closed or semi-open intervals have analogous meaning. 

\subsection{The $\star$-process} 
 Start with a $(d-1)$-face $\sigma_0$ (on the vertex set $\{-d, \ldots, -1\}$) and subdivide it with a vertex labelled $v_0$. Thereby,  
form the simplicial complex $S_0$ induced by the $(d-1)$-faces containing vertex $v_0$, taking the downwards 
closure of them. 
 We call $v_0$ the \emph{centre} of  $S_0$.
 Set $\mathcal{A}^{\star}_{0} := \St{v_0}{S_0}$. Then conditional on $S_t$: 
\begin{itemize}
  \item[(i)] Select a face $\sigma_t$ from $\mathcal{A}^{\star}_{t}$ with probability proportional to its fitness, and subdivide this with a new vertex $t+1$. 
 Form the simplicial complex $S_{t+1}$ discarding the only new face which does not contain $v_0$. 
  \item[(ii)]  In model \textbf{A}
  we set $\mathcal{A}^{\star}_{t+1} := \St{v_0}{S_{t+1}}$; while in model \textbf{B}, 
  we set $\mathcal{A}^{\star}_{t+1} := \St{v_0}{S_{t+1}} \setminus \{\sigma_s : 0 \leq s \leq t \}$.
\end{itemize}
If the centre of $S_0$, that is, $v_0$, has weight $x$, we call this process an \emph{$x\star$-process}. In fact, the Markovian nature of this process implies that its
limiting behaviour does not depend on $\sigma_0$. However, the value of $x$ determines its transition 
law and thereby the limiting behaviour. 
Similarly to the previous section, we define 
\[
Z^\star (S_t) := \sum_{\sigma \in \mathcal{A}^{\star}_{t}} f(\sigma).
\]

\subsection{Almost sure limits in models {\bf A} and {\bf B} and the $\star$-process}

Of particular importance are the limiting distributions of the types in these simplicial complexes. The following results also appear in~\cite{ar:deg_distr2019}.
\begin{prop}
\label{prop:MC}
There exists a probability measure $\pi$ on $\cCd$ such that almost surely $\Pi_n/|\Ac{n}| \to \pi$ weakly as $n\to \infty$.
In particular, if $\{Y_n\}_{n \geq 1}$ is the $\cCd$-valued Markov chain describing the type of the face chosen to be subdivided in the $n$th step, then in distribution, $Y_n \to Y_\infty$, as $n \to \infty$, for a $\cCd$-valued random variable $Y_\infty$ whose law is~$\pi$.

Furthermore, almost surely
\[
\frac{Z(\C{n}{}{})}{n} \to \lambda := \int f(w) \pi(\dd w) > 0,
\]
as $n\to \infty$.
\end{prop}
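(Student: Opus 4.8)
The plan is to establish the almost-sure convergence of the empirical type measure by exhibiting the process as a generalised Pólya urn (or a measure-valued analogue thereof) and invoking the associated strong laws. The key observation is that the set of active faces $\Ac{n}$, together with their types $\w(\sigma) \in \cCd$, behaves like an urn whose ``colours'' are the possible face types. At each step we select an active face $\sigma'$ with probability proportional to $f(\sigma')/Z(\C{n}{}{})$, and subdivide it, thereby replacing one face of type $\w(\sigma')$ by $d$ new faces (in Model \textbf{A}, retaining $\sigma'$ as well; in Model \textbf{B}, removing it), each of whose types is obtained by deleting one coordinate of $\w(\sigma')$ and inserting the freshly drawn weight $w_{n+1} \sim \mu$. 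Because $\mu$ has finite support and $d$ is fixed, the effective state space of distinct reachable types is finite (or can be handled as a finite-type urn after discretisation), which places us squarely in the setting of multitype generalised Pólya urns with random, fitness-weighted replacement rules.

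First I would set up the urn correspondence precisely: let the colour of a face be its type, record the replacement matrix induced by step (iii)--(iv), and verify that the mean replacement operator is well-defined and positive thanks to assumption \textbf{A1} (so $f$ is bounded away from $0$ and $\infty$ on the compact reachable type set, giving uniform control on selection probabilities). Second, I would appeal to the strong law for such urns---the standard route is a stochastic-approximation or martingale argument showing that $\Pi_n/|\Ac{n}|$ tracks the solution of a deterministic limiting dynamical system whose unique fixed point is the sought measure $\pi$. Concretely, writing $X_n := \Pi_n/|\Ac{n}|$, one derives $\E{X_{n+1} \mid \mathcal{F}_n} = X_n + \frac{1}{|\Ac{n}|}\bigl(G(X_n) - X_n\bigr) + O(|\Ac{n}|^{-2})$ for a suitable continuous map $G$ encoding the fitness-weighted subdivision, and then the Robbins--Monro/Kushner--Clark machinery yields $X_n \to \pi$ a.s., where $\pi$ solves $G(\pi) = \pi$. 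The distributional convergence $Y_n \to Y_\infty$ is then immediate, since $Y_n$ is drawn from $\hat\Pi_n$, which is the $f$-tilted version of $X_n$, so its law converges to the $f$-tilting of $\pi$.

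For the second assertion, the convergence $Z(\C{n}{}{})/n \to \lambda$, I would exploit the fact that $|\Ac{n}|$ grows linearly in $n$ (deterministically, up to the fixed additive effect of each subdivision: each step adds a fixed net number of active faces, $d-1$ in Model \textbf{A} and $d-2$ in Model \textbf{B}, say), so that $|\Ac{n}|/n$ converges to a positive constant. Writing $Z(\C{n}{}{}) = \int f\, \dd\Pi_n = |\Ac{n}| \int f \, \dd X_n$, the a.s.\ weak convergence $X_n \to \pi$ together with the boundedness and continuity of $f$ gives $\int f \, \dd X_n \to \int f\, \dd\pi$, and combining the two limits yields $Z(\C{n}{}{})/n \to \lambda := \int f\, \dd\pi > 0$. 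Positivity of $\lambda$ follows from positivity of $f$ in \textbf{A1}.

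**The main obstacle** I expect is the identification and uniqueness of the fixed point $\pi$, i.e., justifying that the limiting deterministic dynamics has a single globally attracting equilibrium rather than merely establishing the existence of subsequential limits. This requires a contraction or monotonicity argument for the operator $G$ on the space of probability measures on $\cCd$, and care is needed because the replacement rule involves inserting a $\mu$-distributed weight and re-sorting coordinates, which makes $G$ an integral operator rather than a simple matrix; confirming that its linearisation has a spectral gap (or that $G$ is a contraction in an appropriate Wasserstein-type metric) is the technically delicate part. A secondary subtlety is controlling the martingale fluctuations uniformly despite the random selection weights, but the boundedness of $f$ from \textbf{A1} keeps the increments bounded and makes the error terms summable, so the strong law goes through once the equilibrium analysis is in place.
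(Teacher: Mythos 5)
Your reduction of the face-type dynamics to a finite-colour, fitness-weighted urn is exactly the paper's starting point, but from there you diverge: the paper embeds the urn into continuous time \`a la Athreya--Karlin and reads off the limit directly from Proposition 2 of \cite{ar:AthrKarlin68} together with Theorem 5 of \cite{ar:AthrKarlin67}, which identify $\pi$ explicitly as (proportional to) the left Perron eigenvector $u$ of the generator $A_{ij}=a_iB_{ij}$, whereas you propose a discrete-time stochastic-approximation (Robbins--Monro/ODE) argument. The latter is a legitimate alternative in principle, but as written your proposal has a genuine gap precisely at the point you flag yourself: you never identify the fixed point of $G$, nor prove that it is unique and globally attracting on the simplex of type distributions. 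This is not a secondary technicality --- it is the entire content of the convergence statement. The missing structure is the same in either approach, namely irreducibility of the mean replacement matrix plus Perron--Frobenius theory: the candidate equilibria of the limiting ODE correspond to left eigenvectors of $A$, and irreducibility is what guarantees that only the Perron eigenvector lies in the simplex and that the associated eigenvalue $\lambda$ is the growth rate. You do not mention irreducibility at all, and without it neither your contraction/spectral-gap programme nor the paper's cited theorems apply. Supplying this identification is also what yields the second assertion in the sharp form the paper obtains, namely that the limit of $Z(\C{n}{}{})/n$ equals the Perron eigenvalue of $A$, rather than merely ``a positive constant''.

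Two smaller points. First, your step-size heuristic normalises by $|\Ac{n}|$, but the selection probabilities are governed by the random total fitness $Z(\C{n}{}{})$, so the recursion has a random, state-dependent gain; this is harmless once the a.s.\ behaviour of $Z(\C{n}{}{})/n$ is known, but that is part of what is being proved, which is one reason the continuous-time embedding is the cleaner route. Second, your face counts are off by one: each subdivision creates $d+1$ faces of dimension $d-1$, one of which is the subdivided face itself, so the net gain of active faces per step is $d$ in Model \textbf{A} and $d-1$ in Model \textbf{B} (the paper uses $|\Ac{n}|=dn+d+1$). This does not affect the structure of your argument, but it does change the constant relating $|\Ac{n}|$ to $n$, and hence the constant by which $\int f\,\dd\pi$ must be multiplied to recover $\lim_n Z(\C{n}{}{})/n$.
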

For the $x\star$-process one can show a similar result except that now the limit depends on $x$. 
\begin{prop} \label{prop:MC*} 
For $x\in(0,1]$ let $(S_n)_{n\geq 0}$ be an $x\star$-process. 
There exists a measure $\pi_x^\star$ on $\cCd$ such that  
almost surely
\[
\frac{Z^\star(S_n)}{n} \to  \lambda_x := \int f(y) \pi_x^\star (\dd y) > 0,
\]
as $n \to \infty$.

\end{prop}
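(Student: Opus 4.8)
The plan is to recast the $x\star$-process as a finite-type generalised P\'olya urn and then run exactly the branching-process argument already used for Proposition~\ref{prop:MC}, the only structural novelty being that every active face now contains the centre $v_0$ of fixed weight $x$. Because $\mu$ has finite support, all vertex weights lie in a fixed finite set, and since each $\sigma \in \mathcal{A}^\star_n$ consists of $v_0$ together with $d-1$ further vertices, its type $\w(\sigma)$ ranges over a finite set $\mathcal{T}_x \subset \cCd$ of tuples one of whose coordinates is always $x$. This finiteness, together with the positivity and boundedness of $f$ from assumption \textbf{A1}, is what lets us treat the process as a weighted urn, and it is the fixed coordinate $x$ that will make the limit depend on $x$.

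Concretely, I would view the active faces as balls coloured by their type $\tau \in \mathcal{T}_x$, a ball of colour $\tau$ carrying activity $f(\tau) > 0$, so that drawing a ball with probability proportional to its activity reproduces the fitness-proportional selection in step (i). Subdividing a face of type $\tau = (x,a_1,\ldots,a_{d-1})$ with a new vertex of weight $w \sim \mu$ creates the $d-1$ faces whose types are obtained from $\tau$ by replacing one $a_i$ with $w$ and re-sorting, while in Model \textbf{A} the parent is retained and in Model \textbf{B} it is removed; averaging over $w$ gives the mean replacement operator. In particular $|\mathcal{A}^\star_n|$ grows deterministically, increasing by $d-1$ (resp.\ $d-2$) per step, so $|\mathcal{A}^\star_n|/n \to c > 0$. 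I would then embed the urn into a continuous-time multitype Markov branching process following Athreya and Karlin: a face of type $\tau$ splits at rate $f(\tau)$ according to the replacement rule, the jump times occur at total rate $Z^\star(S_n)$, and the Kesten--Stigum/Perron--Frobenius theory yields almost-sure convergence of the type proportions. Taking $\pi_x^\star$ to be the a.s.\ weak limit of the empirical type measure $n^{-1}\sum_{\sigma \in \mathcal{A}^\star_n}\delta_{\w(\sigma)}$ and setting $\lambda_x := \int f(y)\,\pi_x^\star(\dd y) = \lim_n Z^\star(S_n)/n$ then gives the claim, with $\lambda_x > 0$ immediate from $f > 0$ and $c > 0$.

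The two points requiring genuine care, where the main obstacle lies, concern the spectral structure of the replacement operator. First, one must verify irreducibility (hence applicability of Perron--Frobenius with a simple dominant eigenvalue) on the \emph{core} of types whose $d-1$ non-centre coordinates all lie in $\supp\mu$: starting from any core type, successive subdivisions can inject arbitrary values of $\supp\mu$ and thereby drive each coordinate to any target value, so the core is a single aperiodic recurrent class. Second, and more delicately, one must dispose of the finitely many \emph{seed} types coming from $\mathcal{A}^\star_0$, whose non-centre weights need not lie in $\supp\mu$: such a weight can never be regenerated once a face loses it, so at every reproduction at least one offspring leaks into the core and none returns, whence the seed-carrying subpopulation grows at a Malthusian rate strictly below that of the core and contributes zero mass to $\pi_x^\star$. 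This sub-dominance is exactly what makes the limit independent of $S_0$ and dependent on the seed only through the fixed coordinate $x$, as anticipated by the Markovianity remark preceding the statement; granting the branching-process convergence invoked for Proposition~\ref{prop:MC}, which applies once the finite colour space $\mathcal{T}_x$ and activities $f(\tau)$ are in place, these two verifications complete the argument.
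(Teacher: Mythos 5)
Your proposal follows essentially the same route as the paper: the paper proves Proposition~\ref{prop:MC*} simply by declaring it analogous to Proposition~\ref{prop:MC}, i.e., by the same reduction to a finite-colour generalised P\'olya urn embedded in continuous time \`a la Athreya--Karlin (the only structural change being that every colour carries the fixed centre weight $x$), and your construction of the urn, of $\pi_x^\star$ as the a.s.\ limit of $n^{-1}\sum_{\sigma}\delta_{\omega(\sigma)}$, and of $\lambda_x$ as the corresponding Perron root all match. The one place you go beyond the paper --- the claim that seed types inherited from $\sigma_0$ are sub-dominant because their Malthusian rate is strictly below the core's --- is not actually secured by the offspring-counting argument alone (with unequal fitnesses the Perron root of the seed block can exceed that of the core, e.g.\ if faces carrying a seed weight have much larger fitness), but the paper sidesteps this entirely through its blanket irreducibility assumption on $M$, which in effect restricts attention to types whose non-centre coordinates lie in $\supp(\mu)$.
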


We postpone the proof of the above propositions to Section~\ref{sec:polya_urns}.

\subsection{Main result: the critical density} 

We will consider the evolution of the bootstrap percolation process on $\C{n}{}{}$ in the case where the initially infected 
set $A_0$ is a \emph{binomial} subset of $V_n$, that is, vertices in $V_n$ become initially active with 
probability $p=p(n)$ independently. We denote the corresponding probability measure by $\mathbb A_{p,n}$

 Our aim is to find a critical value $p_c(n)$ for the infection probability, 
such that if $p \ll p_c(n)$, then no evolution occurs with high probability, whereas if $p \gg p_c (n)$, then 
the process percolates. 
We define $\lambda^\star= \max_{x\in \supp (\mu)}  \lambda_x$. As the main result of this paper, we prove the following theorem.

\begin{thm}
\label{thm:main}
Let $p_c(n) = n^{-\lambda^\star/(k \lambda)}$. Let $(A_t)_{t\in \mathbb{N} \cup \{0\}}$ be the $(r,k)$-bootstrap process on 
$\C{n}{}{}$ with $A_0$ sampled according to $\mathbb A_{p,n}$.
Let $\omega : \mathbb{N} \to \mathbb{R}$ such that $\omega (n)\to\infty$, as $n \to \infty$. 
\begin{enumerate}
\item For any $\delta>0$ there exists $n_0$ such that for all $n > n_0$ if $p=\omega (n) p_c$, then 
$$\left( \mathbb{P}_{\C{0}{}{},n} \otimes \mathbb A_{p,n} \right)( \{\overline A_0\} = V_n ) > 1-\delta.$$ 
\item If $\lambda^\star/ \lambda > 1/r$, then for $p =  p_c/\omega(n)$ we have 
$\left(\mathbb{P}_{\C{0}{}{},n}\otimes \mathbb A_{p,n}\right)( \{ A_1=A_0\}) > 1-\delta$.

If $\lambda^\star/ \lambda = 1/r$, then the same holds provided $p =  p_c (\log n)^{-1/rk}/\omega (n)$.
\end{enumerate}
\end{thm}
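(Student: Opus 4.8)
The plan is to establish Theorem~\ref{thm:main} by controlling, for a typical vertex $v$, the number of vertices in its star that can serve as the "missing" vertex of an $A_0$-critical $(k-1)$-face. The key structural fact I would exploit is that the link of each vertex $v$ is essentially a copy of an $x\star$-process (where $x=w_v$ is the weight of $v$), so Proposition~\ref{prop:MC*} tells me that the relevant count of $(k-1)$-faces in $\Lk{v}{\C{n}{}{}}$ grows polynomially with exponent governed by $\lambda_{w_v}/\lambda$. Since each such face becomes fully infected in the first round with probability roughly $p^{\,k-1}$ (its $k-1$ non-$v$ vertices must all lie in $A_0$) and $v$ itself must be healthy, the expected number of $A_0$-critical faces hanging off $v$ scales like $|\St{v}{}| \cdot p^{\,k-1}$, and $v$ gets infected in round one once it collects $r$ disjoint such faces. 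The threshold $p_c=n^{-\lambda^\star/(k\lambda)}$ is precisely the value balancing the largest star growth rate $n^{\lambda^\star/\lambda}$ against $p^{\,k}$, reflecting that the most effective infection route is through a single high-weight vertex attracting an anomalously large number of faces.

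\textbf{For the supercritical part} (item~1), I would first use Proposition~\ref{prop:MC} and Proposition~\ref{prop:MC*} to show that w.h.p.\ there exists a vertex $u^\star$ of weight $x^\star := \arg\max_x \lambda_x$ whose star has size concentrated around $n^{\lambda^\star/\lambda}$. Conditioning on the first-round infection status of the vertices in its link and applying a second-moment or Poissonisation argument, I would argue that when $p=\omega(n)p_c$ the expected number of disjoint fully-infected $(k-1)$-faces around $u^\star$ diverges, so $u^\star$ (and by the same token a positive density of high-weight vertices) becomes infected in round one with probability tending to $1$. The crux is then a \emph{percolation-completion} step: once a macroscopic infected seed is present, I would show the infection propagates to all of $V_n$, presumably by an inductive argument on generations of the attachment tree, using that every vertex's star is large enough (again via the $\star$-process limits) to be triggered once enough of its neighbours are infected. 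I expect this completion step — boosting from "many vertices infected in round one" to "full percolation" — to require the most care, since it demands uniform control over \emph{all} stars simultaneously, not just the extremal one.

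\textbf{For the subcritical part} (item~2), the natural approach is a \emph{first-moment (union) bound}: I would estimate the expected total number of vertices infected in the very first round and show it tends to $0$ when $p=p_c/\omega(n)$. Summing over vertices $v$, the contribution of $v$ is bounded by the probability that $v$ is healthy times the probability that at least $r$ disjoint $(k-1)$-faces in $\Lk{v}{}$ are fully infected; by a union bound over $r$-tuples of disjoint faces this is at most of order $\bigl(|\St{v}{}|\, p^{\,k-1}\bigr)^{r}/r!$ up to constants, weighted by the healthy-vertex factor. Taking expectations over the weights (hence over the per-vertex growth exponents $\lambda_{w_v}/\lambda$), the dominant term again comes from the maximal exponent $\lambda^\star/\lambda$, and the condition $\lambda^\star/\lambda>1/r$ is exactly what guarantees the resulting sum is summable and vanishes; the boundary case $\lambda^\star/\lambda=1/r$ produces a logarithmically divergent sum that the extra $(\log n)^{-1/rk}$ factor in $p$ is tuned to kill.

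\textbf{The main obstacle} I anticipate is obtaining sharp, \emph{uniform-in-$v$} tail bounds on the star sizes $|\St{v}{\C{n}{}{}}|$ across all vertices, since the almost-sure limits in Propositions~\ref{prop:MC} and~\ref{prop:MC*} are asymptotic statements about individual processes, whereas both directions of the proof require simultaneous control over the entire collection of $n$ coupled stars (and, for the first-moment bound, control over the upper tail of the number of faces of each type). Reducing these questions to generalised P\'olya urn concentration estimates — as the abstract promises — and transferring them into explicit polynomial growth bounds with matching exponents is where I expect the technical heart of the argument to lie.
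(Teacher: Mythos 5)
Your subcritical sketch is essentially the paper's argument: a first-moment bound over vertices, with the number of candidate $r$-tuples of faces controlled by the $r$th moment of the degree (obtained from the P\'olya-urn embedding via Janson's moment bound), the sum over arrival times converging precisely when $r\lambda^\star/\lambda>1$, and the extra $(\log n)^{-1/rk}$ absorbing the harmonic sum at the boundary. One slip there: a $(k-1)$-dimensional face has $k$ vertices, so each face in the link of $v$ is fully infected with probability $p^{k}$, not $p^{k-1}$; your own balancing heuristic $n^{\lambda^\star/\lambda}p^{k}\asymp 1$ is the correct one, and carrying $p^{k-1}$ through the union bound would produce the wrong exponent.

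The genuine gap is in the supercritical part, and it is exactly the point the paper spends most of its effort on. Knowing that $|\mathrm{st}(v_0)|\gtrsim n^{\lambda^\star/\lambda}$ and that each $(k-1)$-face in the link is fully infected with probability $p^{k}$ does \emph{not} yield $r$ \emph{disjoint} critical faces: a single neighbour of $v_0$ may lie in a huge number of the faces of $\mathrm{st}(v_0)$ (the codegrees inside the star are not controlled a priori), so a second-moment or Poissonisation count of infected faces can be dominated by heavily overlapping families and cannot be converted into $r$ disjoint ones without further structural input. The paper resolves this constructively: it splits time into $d$ intervals $(in/d,(i+1)n/d]$ and shows, via a geometric-waiting-time/fitness argument, that in each interval a positive proportion of the currently tracked faces is subdivided, each subdivision replacing one shared vertex by a fresh one; after $d-1$ rounds this produces $\Omega(n^{\lambda^\star/\lambda})$ faces meeting pairwise only in $v_0$, to which a Chernoff bound then applies. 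Your proposal contains no substitute for this step. Your completion step is also misdirected: you do not need uniform control of all stars or a macroscopic seed. The paper only infects the $d$ vertices of a single face of maximal weight (union bound over $d$ vertices), and then the spread is \emph{deterministic}: since $d\geq rk$, any fully infected $(d-1)$-face sitting in the link of a vertex $u$ already contains $r$ disjoint $(k-1)$-faces, so $u$ is infected; the recursive construction of the complex then propagates the infection across adjacent $d$-simplices to all of $V_n$. The hypothesis $d\geq rk$, which your sketch never invokes, is what makes this completion trivial.
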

This critical behaviour was also proved for the classic (graph) bootstrap process on the Barab\'asi-Albert model by the first author and Abdullah~\cite{ar:AbdFountRSA} as well as by Ebrahimi et al.~\cite{ar:EGGS2017}. In a different context, such a phenomenon was also shown in inhomogeneous random graphs~\cite{ar:AmFount2014, ar:FKKM2018} as well as in random graphs 
on the hyperbolic plane~\cite{ar:CandFount2016}.  In~\cite{ar:FKKM2018} it is proved that in the class of inhomogeneous random graphs (Chung-Lu model) such a transition occurs essentially only when the 
degree distribution follows a power law with exponent less than 3. 

\subsubsection*{Sketch of proof}

We prove Theorem~\ref{thm:main} in Section~\ref{sec:mainResult}. The idea of the proof is as follows. In the supercritical case, we show that w.h.p. $\C{n}{}{}$ contains a face $\sigma$, all vertices of which have degrees high enough to contain at least $r$ disjoint $(k-1)$-dimensional faces in their link that are initially fully infected. This leads to a complete infection of the $d$ vertices of $\sigma$ at time $1$. Then, since $d \geq rk$, the infection spreads to the rest of the complex through the natural ``closing of $d$-dimensional simplices''.

The proof of the subcritical case uses a first moment argument. Namely, we show that with high probability no vertex has a degree high enough to have a chance to see at least $r$ disjoint $(k-1)$-dimensional faces in its link that are initially fully infected. Consequently, $A_1 = A_0$ and, in particular, we have no percolation.

One of the main assumptions in Theorem~\ref{thm:main} is the requirement that $\lambda^\star/ \lambda \geq 1/r$. This condition is critical for the first moment argument in the lower bound on $p_c$ to hold (however it plays no role in the proof of the upper bound, which holds regardless of the value of $\lambda^\star/ \lambda$). An important class of models for which Theorem~\ref{thm:main} does apply are the $d$-dimensional Random Apollonian Networks, which exhibit $\lambda^\star/ \lambda = \frac{d-2}{d-1}$. In Section~\ref{sec:examples} we present a much broader class of models, with a non-trivial weight distribution $\mu$ and fitness function $f$, for which the assumptions of Theorem~\ref{thm:main} hold.

\subsubsection*{The $d=2$ case}

So far, in our discussion we have assumed that $d \geq 3$. It is however interesting to ask what happens when $d=2$, i.e., when new vertices arrive and connect to existing edges by forming triangles. Here, however, there is a very significant qualitative difference between models \textbf{A} and \textbf{B}. In particular, in model \textbf{A} an equivalent of Theorem \ref{thm:main} also holds. On the other hand, in model \textbf{B}, when any edge can only be selected once in the whole evolution of the simplicial complex by an incoming vertex, the 1-skeleton of 
$\C{n}{}{}$, that is, the graph $(\C{n}{}{0}, \C{n}{}{1})$, is outerplanar, with the edges of the outer face constituting the set of active edges. Consequently, at any point in time, every vertex of the complex is incident to exactly two active edges and we have no preferential attachment mechanism. (It can be shown easily that for $d=2$, in model \textbf{B} we have $\lambda_x = 0$ for all weights $x$.) This results in the distributions of vertex degrees having exponential tails.

The condition that $rk \leq d = 2$ allows only two interesting bootstrap percolation processes: the $r=1, k=2$ process in which a vertex becomes infected if it completes an infected triangle, and the $r=2, k=1$ process in which a vertex becomes infected if it has at least two infected neighbours. In the first model, a set $A$ percolates if and only if it contains an infected edge. In the latter, $A$ needs to contain an infected edge or a pair of infected vertices at distance $2$. The concentration of degrees then leads to the critical probability of the order of $n^{-1/2}$.

\subsubsection*{Organisation of the paper}

The rest of this paper is structured as follows. In Section~\ref{sec:polya_urns} we recall the generalised P\'olya urn schemes, we define the reduction of the growth of random simplicial complexes to the evolution of P\'olya urns, and using this reduction we prove some results about the distribution of vertex degrees in our complexes. In Section~\ref{sec:mainResult} we prove Theorem~\ref{thm:main}, and in Section~\ref{sec:examples} we provide some examples of weight distributions and fitness functions for which Theorem~\ref{thm:main} applies. Finally, in Section~\ref{sec:further} we state some open problems and possible generalisations of our work.

\section{Reduction to generalised P\'olya urn schemes}
\label{sec:polya_urns}

In this section we recall some results about the generalised P\'olya urn schemes, and we show a coupling between the growth of our random complex and the evolution of the profile of an urn. Having established this coupling, we then exploit the theory of  P\'olya urns, in particular some results obtained by Athreya and Karlin, and by Janson, to obtain bounds on the tails of the distribution of vertex degrees, as well as on its moments. These bounds are then used in  Section~\ref{sec:mainResult} to prove Theorem~\ref{thm:main}.

Recall that in this paper, we shall consider the case where the vertex weight distribution $\mu$ is a point measure that is finitely supported in $(0,1]$: 
 $\mu = \sum_{i=1}^K \mu_i \delta_{x_i}$ for some $\mu_1, \ldots, \mu_K > 0$ adding up to one and 
 $x_1, \ldots, x_K \in (0,1]$. 
 
Then there are a finite number of face types, which by the fact that $f$ is symmetric depend on the multiset of weights of the vertices contained in a face. We can view these types as \emph{colours} in an urn scheme with a finite number 
of colours. More specifically, a face $\sigma$ has colour $\w (\sigma)$ which belongs to the set of multisets of size 
$d$ whose elements are in $\{x_1,\ldots, x_K\}$. (Note that there are $q:={d+K-1\choose d}$ such multisets.) 
Furthermore, the multiset $\w(\sigma)$ has fitness $f(\omega(\sigma))>0$. 

The evolution of the simplicial complex in our model can be seen as an
urn scheme which contains balls, each having a type/colour from a set of $q$ types/colours. 
Each type has a positive weight. 
Model {\bf A} corresponds to the following replacement scheme. At each step, a ball is selected with probability proportional 
to the weight of its type. Then, $d$ balls are added to the urn, where the types of the balls are random and are determined as follows. The  weight of the newly arrived vertex 
is sampled according to $\mu$. This together with each one of the $d$ $(d-1)$-subsets of the sampled face 
creates $d$ multisets of weights. These are the new balls that are added. 
Model {\bf B} is similar, but the ball that has been selected is removed from the urn. 

Such general urn schemes embedded in \emph{continuous} time were considered by Athreya and Karlin~\cite{ar:AthrKarlin68}. 
Let $X(t) = (X_1(t),\ldots, X_q (t))$ be the composition vector of an urn containing balls with 
$q$ possible types. For each $i=1,\ldots, q$, a  ball of type $i$ has activity/fitness $a_i>0$. 
A ball of type $i$ dies with rate $a_i$ and at the moment of its death it produces balls 
of all types according to a given distribution. 

Let $M_{ij}(t) = \E{X_j (t)  \ | \ X_r (0) = \delta_{ir}, \ r=1,\ldots, q}$; in other words, $M_{ij}(t)$ 
is the expected number of balls of type $j$ at time $t$ given that the initial configuration consists 
of a single ball of type $i$. Furthermore, $M(t) = \exp (At)$, where $A$ is 
the infinitesimal generator of the process: $A_{ij} = a_i B_{ij}$, where $B_{ij}$ is the expected 
number of balls of type $j$ a ball of type $i$ produces upon its death. (In model {\bf B}, when $j=i$, we subtract 1 
to account for the ball of type $i$ that has died.)

The basic assumption on $M$ is the \emph{irreducibility assumption}: there exists $t_0$ such that 
for all $i,j=1,\ldots q$ we have $M_{ij}(t_0) >0$. 
If this is the case, then $A$ has a unique eigenvalue $\lambda$ of  maximum real part, which is real and 
there exist left and right eigenvectors $u,v$, respectively, with positive entries such that 
$Av = \lambda v$ and $u^T A = \lambda u^T$. We assume that they are normalised so that $u \cdot v =1$, where 
$\cdot$ denotes the usual dot product of two vectors in $\mathbb{R}^q$. 
  
Let $\tau_n$ be the stopping time which is the time when the $n$th split occurs, 
and let $X_n = X(\tau_n)$; this is the composition vector of the urn after $n$ balls have split.  
 
 The next theorem follows from Proposition 2 in~\cite{ar:AthrKarlin68} together with Theorem 5 
 from~\cite{ar:AthrKarlin67}.  
\begin{thm}  \label{thm:as_convergence}
Let $\rho = \nu (u_1,\ldots, u_q)^T$, where $\nu = \lambda \left(\sum_{i=1}^q a_i u_i\right)^{-1}$. 
Then almost surely 
\begin{align} \frac{X_n}{n} \to \rho  \ \mbox{as $n \to \infty$}. \end{align} 
\end{thm}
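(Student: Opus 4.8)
The plan is to lift the convergence from the continuous-time branching embedding and then convert from real time $t$ to the discrete split-time scale. The two ingredients are an almost sure limit theorem for the vector $X(t)$ as $t\to\infty$ (the content of Proposition 2 in \cite{ar:AthrKarlin68}) and an asymptotic description of the split times $\tau_n$ (for which we use Theorem 5 of \cite{ar:AthrKarlin67}). The structural point I would flag at the outset is that, under normalisation by the number of splits $n$, the random Malthusian factor cancels and leaves the deterministic limit $\rho$; moreover the limiting direction is the \emph{left} eigenvector $u$, not $v$, as one already sees from the mean matrix $M(t)=\exp(At)\sim e^{\lambda t}\,v\,u^{T}$ under irreducibility.

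First I would record the continuous-time limit. Since $Av=\lambda v$, the process $e^{-\lambda t}\,(v\cdot X(t))$ is a nonnegative martingale, and multitype Markov branching theory (Proposition 2 of \cite{ar:AthrKarlin68}) gives $e^{-\lambda t}X(t)\to W\,u$ almost surely as $t\to\infty$, where $W\ge 0$ satisfies $v\cdot(Wu)=W$ by the normalisation $u\cdot v=1$. Because every type has positive activity $a_i$ and the urn never empties (balls are only added at each split), the process survives with probability one, and irreducibility forces $W>0$ almost surely. This step pins down both the growth rate $\lambda$ and the limiting direction $u$.

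Next I would pass from time to the number of splits. The splits form a counting process $N(t)$ whose stochastic intensity at time $s$ is the total activity $\sum_i a_i X_i(s)=a\cdot X(s)$, so $N(t)-\int_0^t a\cdot X(s)\,ds$ is a martingale. Substituting $X(s)\sim W e^{\lambda s}u$ from the previous step yields
\[
\int_0^t a\cdot X(s)\,ds \;\sim\; \frac{W\bigl(\sum_i a_i u_i\bigr)}{\lambda}\,e^{\lambda t},
\]
and a law-of-large-numbers control of the martingale part (its predictable quadratic variation is of order $e^{\lambda t}$, so its fluctuations are $O(e^{\lambda t/2})$ and negligible) gives $e^{-\lambda t}N(t)\to \frac{W\sum_i a_i u_i}{\lambda}$ almost surely; this is the information packaged in Theorem 5 of \cite{ar:AthrKarlin67}. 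Evaluating at $t=\tau_n$ and using $N(\tau_n)=n$ inverts to $e^{\lambda\tau_n}\sim \frac{\lambda n}{W\sum_i a_i u_i}$.

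Finally I would combine the two displays. Writing $X_n=X(\tau_n)=e^{\lambda\tau_n}\bigl(e^{-\lambda\tau_n}X(\tau_n)\bigr)$ and letting $n\to\infty$ (so $\tau_n\to\infty$), the bracket tends to $Wu$ while $e^{\lambda\tau_n}\sim \lambda n/(W\sum_i a_i u_i)$; the factor $W$ cancels and
\[
\frac{X_n}{n}\;\to\;\frac{\lambda}{\sum_i a_i u_i}\,u \;=\;\nu\,u\;=\;\rho
\]
almost surely, as claimed. The main obstacle is the honest justification of the split-counting limit in the third step: one must show that the random time-change by $N(t)$ is compatible with the almost sure asymptotics of $X(t)$ uniformly enough to substitute $\tau_n$ into the continuous-time limit, so that both almost sure statements hold on a common event of full probability. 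This is precisely where the cited Athreya--Karlin results do the real work, controlling the fluctuations of $N(t)$ around its compensator.
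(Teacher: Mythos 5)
Your proposal is correct and follows essentially the same route as the paper: both invoke Proposition~2 of Athreya--Karlin (1968) for $X_n e^{-\lambda\tau_n}\to Wu$ and Theorem~5 of Athreya--Karlin (1967) for $n\nu e^{-\lambda\tau_n}\to W$, then multiply and cancel the random factor $W$. Your additional remarks (the martingale heuristics behind the cited results and the explicit observation that $W>0$ almost surely, which is needed for the cancellation) are sensible elaborations but do not change the argument.
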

\begin{proof}
By Proposition 2 in~\cite{ar:AthrKarlin68} we know that there is an almost surely finite random variable $W$ such that $X_n e^{-\lambda \tau_n} \to Wu$ as $n \to \infty$. Theorem 5 from~\cite{ar:AthrKarlin67} says that we simultaneously have $n \nu e^{-\lambda \tau_n} \to W$ as $n \to \infty$. Hence, as
$X_n e^{-\lambda \tau_n} = \tfrac{X_n}{\nu n} n \nu e^{-\lambda \tau_n}$, we see that $\tfrac{X_n}{\nu n} \to u$, and the theorem follows.
\end{proof}

Propositions~\ref{prop:MC} and~\ref{prop:MC*} are straightforward consequences of the above theorem.

\begin{proof}[Proof of Proposition~\ref{prop:MC}]
Consider model \textbf{A}; the proof in model \textbf{B} is analogous. Here, we have $|\Ac{n}| = dn+d+1$, and so $\tfrac{\Pi_n}{|\Ac{n}|} = \tfrac{\Pi_n}{dn+d+1}$. Since $\Pi_n(\omega(\sigma))$ is equal to the number of active faces of type $\omega(\sigma)$, we can associate it with the number of balls of colour $\omega(\sigma)$, denoted $X_{\omega(\sigma)}(n)$, in the appropriate generalised P\'olya urn scheme. By Theorem~\ref{thm:as_convergence} we have
\[
\frac{X_n}{n} \to  \frac{\lambda (u_1,\ldots, u_q)^T}{\sum_{i=1}^q a_i u_i},
\]
therefore, slightly abusing the notation, we obtain that
\[
\tfrac{\Pi_n}{dn+d+1} \to \pi := \frac{\lambda \sum_{i=1}^q u_i \delta_{\omega(i)}}{d \sum_{i=1}^q a_i u_i}.
\]

The almost sure convergence of $\tfrac{\Pi_n}{|\Ac{n}|}$ implies the existence of the almost sure limit of $\frac{Z(\C{n}{}{})}{n}$. However, with a bit more work we will be able to identify this limit as the maximum real eigenvalue $\lambda$ of the infinitesimal generator matrix $A$ of the appropriate generalised P\'olya urn scheme.

By the fact that $u$ is a left eigenvector of $A$ with eigenvalue $\lambda$ and with $\indi$ being the 
all-1s vector in $\mathbb{R}^q$, we can see that
\[
\sum_{i=1}^q u_i = u^T \indi = \frac{1}{\lambda} u^T A \indi.
\]
Any row $i$ in the matrix $A$ sums to the expected number of balls that are created when a ball of colour $i$ dies, multiplied by the activity $a_i$ of balls of that colour. In model \textbf{A} the number of balls created is always equal to $d$, therefore we have
\[
\frac{1}{\lambda} u^T A \indi = \frac{1}{\lambda} u^T (a_1 d, \ldots, a_q d)^T = \frac{d}{\lambda} \left ( \sum_{i=1}^q a_i u_i \right ),
\]
and $\pi$, the limit of $\Pi_n/|\Ac{n}|$ is indeed a probability distribution. Finally, recalling that the fitness $f(\omega(\sigma))$ of a face $\sigma$ is equivalent to the activity $a_i$ of the appropriately coloured ball in the generalised P\'olya urn scheme, we get
\[
\frac{Z(\C{n}{}{})}{n} = d \frac{\int_{\cCd} f (x)  \Pi_n (\dd x)}{d n} \to d \int_{\cCd} f (x) \pi(dx) = d \sum_{i=1}^q a_i \frac{\lambda u_i}{d \sum_{j=1}^q a_j u_j} = \lambda.
\]
\end{proof}
The proof of Proposition~\ref{prop:MC*} in analogous.

\subsection{The distribution of degrees}

In this section, we will give estimates on the tails of the distribution of the degree of vertex $i_0$ at time $n$, as well as on the expected rate of growth of the $r$th power of the degree. 
To this end, we shall use the continuous time embedding of the generalised P\'olya process.

We will consider the generalised P\'olya urn which corresponds to the $\star$-process rooted at $i_0$. 
Let $Y_t^{(i_0)}:=X_{t+\tau_{i_0}}^{(i_0)}=(X_1^{(i_0)}(t+\tau_{i_0}),\ldots, X_q^{(i_0)}(t+\tau_{i_0}))$ denote the composition vector of the generalised P\'olya urn representing the star of $i_0$. We will use a result of Janson~\cite{ar:Janson2004} (Lemma 10.2, p. 232) which bounds the expectation of the $r$th norm of $Y_t^{(i_0)}$. 
\begin{lem} \label{lem:rth_norm} If vertex $i_0$ has fitness $x$, then 
for all $t\geq 0$ we have 
\[
\|Y_t^{(i_0)}\|_r = \left ( \E{  \sum_{i=1}^q \left( X_i^{(i_0)}(t+\tau_{i_0})  \right )^r } \right )^{1/r} \leq C e^{\lambda_x t}.
\]
\end{lem}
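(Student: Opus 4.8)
The plan is to exploit the continuous-time (Athreya--Karlin) embedding of the $x\star$-process urn as a multitype Markov branching process and to control its mixed moments up to order $r$. Recall that this urn has $q$ types; a ball of type $i$ carries activity $a_i = f(\omega(i)) > 0$, and by assumption \textbf{A1} the activities are bounded away from $0$ and $\infty$. Each split event adds a bounded number of balls (at most $d+1$), so the offspring distribution has bounded support and all its moments are finite. The first-moment matrix is $\exp(A^{(x)} t)$, whose dominant eigenvalue is the real Perron eigenvalue $\lambda_x$ of Proposition~\ref{prop:MC*}; the irreducibility assumption standing in this section guarantees that $\lambda_x$ is simple, with strictly positive left and right eigenvectors. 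Since the time-shift by $\tau_{i_0}$ merely resets the clock to the birth time of $i_0$, and the initial vector $X(0) = Y_0^{(i_0)}$ is a fixed finite configuration (the $d$ faces forming the star at birth), it suffices to bound $\E{\sum_{i=1}^q (X_i(t))^r}$ for the branching process $X(t) = (X_1(t),\ldots,X_q(t))$ started from $X(0)$, with $Y_t^{(i_0)} = X(t+\tau_{i_0})$ after the obvious relabelling.

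The core estimate is $\E{\sum_{i=1}^q (X_i(t))^r} = O(e^{r \lambda_x t})$, which is precisely the content of Lemma 10.2 in~\cite{ar:Janson2004}; one route is thus to verify its hypotheses (irreducibility, bounded branching, finite offspring moments) and quote it directly. Preferring a self-contained derivation, I would instead prove by induction on $s = 1, \ldots, r$ that every mixed moment $\E{\prod_{\ell=1}^s X_{j_\ell}(t)}$ is $O(e^{s \lambda_x t})$. The base case $s = 1$ is immediate from $\E{X(t)} = X(0)\exp(A^{(x)} t)$ together with the Perron--Frobenius structure of $A^{(x)}$, which yields $\|\exp(A^{(x)} t)\| \le C e^{\lambda_x t}$ with no polynomial prefactor, precisely because $\lambda_x$ is the \emph{simple} dominant eigenvalue.

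For the inductive step I would apply the infinitesimal generator of the branching process to a monomial $\prod_{\ell=1}^s X_{j_\ell}$. The result splits into a homogeneous part, in which the generator acts one coordinate at a time through $A^{(x)}$ and therefore drives growth at rate $s\lambda_x$, and an inhomogeneous source arising from a single ball splitting and thereby creating correlations among its offspring; this latter part collapses two or more of the indices, so it is a linear combination of moments of order at most $s-1$, which are $O(e^{(s-1)\lambda_x t})$ by the inductive hypothesis. Solving the resulting linear system by Duhamel's formula, the particular solution is $\int_0^t e^{s\lambda_x (t-u)}\, O(e^{(s-1)\lambda_x u})\, du = O(e^{s\lambda_x t})$, and since the source rate $(s-1)\lambda_x$ is \emph{strictly} below the homogeneous rate $s\lambda_x$ there is no resonance. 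Taking $s = r$ and summing over the finitely many types gives $\E{\sum_{i=1}^q (X_i(t))^r} \le C' e^{r\lambda_x t}$, and the $r$th root yields the claim.

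The main obstacle is the bookkeeping in the inductive step: one must write out the action of the generator on a degree-$s$ monomial carefully enough to confirm that \emph{every} term produced by a branching event either preserves the moment order (and is thus absorbed into the homogeneous rate $s\lambda_x$) or strictly lowers it. The crucial structural fact is that a single split can merge lineages but cannot spontaneously increase the number of distinct factors, so no source term sits at the resonant rate $s\lambda_x$; this is exactly what rules out a spurious polynomial-in-$t$ factor and delivers the clean exponential bound. Establishing this termwise, for all $s \le r$ and all choices of indices, is routine but is where essentially all the work lies.
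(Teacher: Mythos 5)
Your proposal is correct, and your first suggested route is in fact exactly what the paper does: Lemma~\ref{lem:rth_norm} is given no proof in the text beyond the citation of Lemma 10.2 of Janson~\cite{ar:Janson2004}, whose hypotheses (irreducibility, finitely many types, bounded and hence all-moments-finite offspring distributions, positive bounded activities) are satisfied here for the reasons you list. Your self-contained alternative --- induction on the moment order $s$, splitting the action of the generator on a degree-$s$ monomial into a homogeneous part with Kronecker-sum structure and dominant rate $s\lambda_x$, plus lower-order source terms handled by Duhamel --- is the standard proof of such bounds and is sound; the key structural observation that a single split can only collapse indices, so every source term has order at most $s-1$, is stated correctly, and the absence of resonance does hold because $\lambda_x>0$ by Proposition~\ref{prop:MC*} (worth saying explicitly, since with $\lambda_x=0$ the Duhamel integral would produce a polynomial factor). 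The only thing the citation route buys over your induction is brevity; your induction buys independence from Janson's precise hypotheses and makes visible exactly where simplicity of the Perron eigenvalue and positivity of $\lambda_x$ enter. Either writeup would be acceptable here.
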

Since $(\sum_{i=1}^q x_i)^r \leq q^r \sum_{i=1}^q x_i^r$ where $x_i \geq 0$ for all $i$,  
we obtain the following bound. 
\begin{cor} With $i_0$ as above, for all $t \geq 0$ we have
\[
\E{ \left( \sum_{i=1}^q X_i^{(i_0)}(t+\tau_{i_0}) \right)^r} \leq q^r C e^{r \lambda_x t}. 
\]
\end{cor}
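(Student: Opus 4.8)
The plan is to obtain the corollary as an immediate consequence of Lemma~\ref{lem:rth_norm}, by relating the $\ell^1$-type quantity $\sum_{i=1}^q X_i^{(i_0)}(t+\tau_{i_0})$ appearing in the statement to the $\ell^r$-type quantity $\sum_{i=1}^q (X_i^{(i_0)}(t+\tau_{i_0}))^r$ that the lemma already controls. Abbreviating $X_i := X_i^{(i_0)}(t+\tau_{i_0})$, the crucial observation is that the entries of the composition vector are counts of balls and hence nonnegative, so the pointwise (indeed deterministic) inequality
\[
\left( \sum_{i=1}^q X_i \right)^r \leq q^r \sum_{i=1}^q X_i^r
\]
holds almost surely. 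The quickest route to this inequality is to bound $\sum_{i=1}^q X_i \leq q \max_{1 \leq i \leq q} X_i$, raise both sides to the $r$th power, and use $(\max_i X_i)^r = \max_i X_i^r \leq \sum_{i=1}^q X_i^r$; alternatively one may invoke convexity of $t \mapsto t^r$ (for $r \geq 1$) through Jensen's inequality, which in fact yields the sharper constant $q^{r-1}$.

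With this inequality in hand, I would take expectations on both sides, using monotonicity and linearity of $\mathbb{E}$, to get $\E{\left( \sum_{i=1}^q X_i \right)^r} \leq q^r \, \E{\sum_{i=1}^q X_i^r}$. Lemma~\ref{lem:rth_norm} asserts that $\left(\E{\sum_{i=1}^q X_i^r}\right)^{1/r} \leq C e^{\lambda_x t}$, so raising that bound to the $r$th power gives $\E{\sum_{i=1}^q X_i^r} \leq C^r e^{r \lambda_x t}$. Substituting yields $\E{\left( \sum_{i=1}^q X_i \right)^r} \leq q^r C^r e^{r \lambda_x t}$, and after relabelling the constant $C^r$ as a new (fitness-dependent) constant $C$ — harmless since $C$ is generic throughout this section — one recovers exactly the claimed bound $q^r C e^{r \lambda_x t}$.

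There is no genuine obstacle here: the statement is a soft corollary, and the only points worth flagging are that the pointwise inequality requires the nonnegativity of the $X_i$ (which is automatic) and that the constant in the corollary is obtained from the constant in the lemma by raising to the $r$th power, so the two $C$'s need not coincide. One should take a little care that the inequality $(\sum_i X_i)^r \leq q^r \sum_i X_i^r$ is applied \emph{before} the expectation is taken, so that no independence or distributional information about the $X_i$ is needed; everything reduces to the scalar inequality between norms of a nonnegative vector in $\mathbb{R}^q$.
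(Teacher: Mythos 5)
Your proposal is correct and follows essentially the same route as the paper, which derives the corollary in one line from the pointwise inequality $(\sum_{i=1}^q x_i)^r \leq q^r \sum_{i=1}^q x_i^r$ for nonnegative $x_i$ combined with Lemma~\ref{lem:rth_norm}. Your remark that the constant should strictly be $C^r$ rather than $C$ (harmlessly absorbed by relabelling) is a fair observation that the paper glosses over, but it does not constitute a different approach.
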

But note that $ \sum_{i=1}^q X_i^{(i_0)}(t+\tau_{i_0})$ is equal to the number of vertices that have 
become adjacent to $i_0$ by time $t+\tau_{i_0}$. Note also that the degree 
of $i_0$ by time $t$ is in fact equal to $d + \sum_{i=1}^q X_i^{(i_0)}(t+\tau_{i_0})$, as $i_0$ has degree $d$ the moment it is generated.  
So if we denote by $D_{t}(i_0)$ the degree of $i_0$ at time $t+ \tau_{i_0}$, we then deduce that for all 
$t\geq 0$
\begin{align} \label{eq:rth_moment} 
\E { D_{t}^r (i_0)} & = \E {\left (d + \sum_{i=1}^q X_i^{(i_0)}(t+\tau_{i_0})\right )^r} \nonumber \\
 & = \E{ \sum_{i=0}^r \binom{r}{i} d^{r-i} \left( \sum_{i=1}^q X_i^{(i_0)}(t+\tau_{i_0}) \right)^i} \nonumber \\
 & \leq 2^r d^r \E{ \left( \sum_{i=1}^q X_i^{(i_0)}(t+\tau_{i_0}) \right)^r} \nonumber \\
 & \leq (2dq)^r C e^{r\lambda_x t},
\end{align}
where $C$ is as in Lemma~\ref{lem:rth_norm}.  

Furthermore, we will make use of the following result of Athreya and Karlin~(see Theorem 6 in~\cite{ar:AthrKarlin67}) regarding 
the times of arrival of the vertices.
\begin{thm}
\label{thm:splitting_times_convergence}
The value of $\tau_n-\log n / \lambda$ converges almost surely to a finite valued random variable.
\end{thm}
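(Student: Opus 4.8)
The plan is to extract the result from the two convergence facts that already drive the proof of Theorem~\ref{thm:as_convergence}. Proposition~2 of~\cite{ar:AthrKarlin68} supplies an almost surely finite random variable $W$ with $X_n e^{-\lambda \tau_n} \to W u$ almost surely (componentwise), and Theorem~5 of~\cite{ar:AthrKarlin67} gives $n \nu e^{-\lambda \tau_n} \to W$ almost surely, where $\nu = \lambda\left(\sum_{i=1}^q a_i u_i\right)^{-1}$ is a fixed, finite, positive constant. Assuming for the moment that $0 < W < \infty$ almost surely, I would simply take logarithms in the latter relation: on this full-probability event $\log n + \log \nu - \lambda \tau_n \to \log W$, and therefore
\[
\tau_n - \frac{\log n}{\lambda} \to \frac{1}{\lambda}\left(\log \nu - \log W\right) =: \xi
\]
almost surely, with $\xi$ an almost surely finite random variable. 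This is exactly the asserted convergence.

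The only point needing genuine work is the positivity of the limit $W$; finiteness is already given by Proposition~2, and the passage through $\log$ together with the identification of the limit is immediate from continuity. To see $W > 0$ almost surely, I would invoke non-degeneracy of the embedded multitype branching process. Dotting $X_n e^{-\lambda \tau_n} \to W u$ with the right eigenvector $v$ and using the normalisation $u \cdot v = 1$ identifies $W = \lim_n e^{-\lambda \tau_n}\, (X_n \cdot v)$ as the limit of the fundamental (nonnegative) martingale associated with the dominant eigenvalue $\lambda$. Since $v$ has strictly positive entries and, in either model, the total number of balls grows deterministically with the number of splits (by $d$ per split in model~\textbf{A} and by $d-1 > 0$ per split in model~\textbf{B}, as $d \geq 3$), the process never becomes extinct; moreover each split adds a bounded number of balls, so the Kesten--Stigum $L\log L$ condition holds trivially. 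By the irreducibility assumption on $M$ the process is positively regular and supercritical ($\lambda > 0$), and the standard positivity theory for such martingale limits then yields $W > 0$ almost surely.

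I expect this positivity of $W$ to be the sole obstacle worth checking; everything else is a one-line logarithm computation on top of the Athreya--Karlin convergence theorems quoted above. Indeed, the statement is attributed to Theorem~6 of~\cite{ar:AthrKarlin67}, so an alternative is to cite it directly; the derivation I have sketched has the advantage of being self-contained within the tools already introduced in this section.
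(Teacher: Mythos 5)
Your derivation is correct, but note that the paper does not actually prove this statement: it is quoted verbatim as Theorem~6 of Athreya and Karlin~\cite{ar:AthrKarlin67}, with no argument supplied. What you have done is reconstruct that theorem from Theorem~5 of the same reference (already invoked in the proof of Theorem~\ref{thm:as_convergence} in the form $n\nu e^{-\lambda\tau_n}\to W$ almost surely) together with the strict positivity of $W$. The logarithm step is indeed immediate once $0<W<\infty$ almost surely, and finiteness comes for free from Proposition~2 of~\cite{ar:AthrKarlin68}; the only substantive ingredient you add is positivity, and your justification is sound: offspring numbers per split are bounded by $d$, so the $L\log L$ (indeed second-moment) condition holds trivially; the embedded process is positively regular by the irreducibility assumption and supercritical since each death yields at least $d-1\ge 2$ net new balls, so $\lambda>0$ and extinction is impossible because the total ball count grows deterministically; hence the martingale limit $W=\lim_n e^{-\lambda\tau_n}\,(X_n\cdot v)$ is almost surely strictly positive by the Kesten--Stigum-type theorem for multitype Markov branching processes. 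Your route buys self-containedness relative to the two results already quoted in Section~\ref{sec:polya_urns}; the paper's route buys brevity, since Theorem~6 of~\cite{ar:AthrKarlin67} packages exactly this statement. If you keep your version, replace the appeal to ``standard positivity theory'' with an explicit citation for the positivity of $W$ (e.g.\ the relevant result in~\cite{ar:AthrKarlin68} or Athreya's work on supercritical multitype Markov branching processes), since that is the one step not already on the page.
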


From Theorem~\ref{thm:splitting_times_convergence} we obtain the following immediate corollary.
\begin{cor}
\label{cor:splitting_times_conc}
For any $\delta >0$ and for any $\eps> 0$ there exists $n_0 \in \mathbb {N}$ such that 
\begin{equation} \label{eq:splitting_times_conc}
\Prob[\C{0}{}{}] {\left| \tau_n - \tau_i - \frac{1}{\lambda} \log \frac{n}{i} \right|< \eps, i=n_0,\ldots, n} > 1-\delta. 
\end{equation}
\end{cor}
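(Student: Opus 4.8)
The plan is to reduce the statement to the telescoping structure hidden in the quantity $\tau_n - \tau_i - \tfrac{1}{\lambda}\log(n/i)$. First I would introduce the centred variable $R_n := \tau_n - \tfrac{1}{\lambda}\log n$, so that for every pair $i \leq n$ one has the exact identity
\[
\tau_n - \tau_i - \frac{1}{\lambda}\log\frac{n}{i} = R_n - R_i.
\]
Consequently the event appearing in~\eqref{eq:splitting_times_conc} is precisely $\{\,|R_n - R_i| < \eps \text{ for all } i = n_0, \ldots, n\,\}$, and Theorem~\ref{thm:splitting_times_convergence} states exactly that $R_n$ converges almost surely to a finite-valued random variable, which I denote $R_\infty$.

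Next I would invoke the standard characterisation of almost sure convergence: $R_n \to R_\infty$ almost surely is equivalent to the assertion that, for every $\eps' > 0$,
\[
\Prob[\C{0}{}{}]{\sup_{m \geq N} |R_m - R_\infty| \geq \eps'} \longrightarrow 0 \qquad \text{as } N \to \infty.
\]
(This follows because $\sup_{m \geq N}|R_m - R_\infty|$ decreases in $N$ to $\limsup_m |R_m - R_\infty|$, which is $0$ almost surely by the theorem.) Applying this with $\eps' = \eps/2$, I choose $n_0$ large enough that $\Prob[\C{0}{}{}]{\sup_{m \geq n_0}|R_m - R_\infty| \geq \eps/2} < \delta$, and hence the complementary event $\mathcal{G} := \{\sup_{m \geq n_0}|R_m - R_\infty| < \eps/2\}$ satisfies $\Prob[\C{0}{}{}]{\mathcal{G}} > 1 - \delta$.

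Finally, on $\mathcal{G}$ we have $|R_m - R_\infty| < \eps/2$ simultaneously for all $m \geq n_0$, so a single application of the triangle inequality gives, for every pair $n_0 \leq i \leq n$,
\[
|R_n - R_i| \leq |R_n - R_\infty| + |R_\infty - R_i| < \frac{\eps}{2} + \frac{\eps}{2} = \eps.
\]
By the identity above this is exactly the event in~\eqref{eq:splitting_times_conc}, and in fact it holds for all such $n$ at once on the single good event $\mathcal{G}$, giving the claimed bound.

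There is no serious obstacle here; the only point requiring care is the passage from the pointwise-in-$n$ convergence provided by Theorem~\ref{thm:splitting_times_convergence} to the uniform-in-$i$ control demanded by Corollary~\ref{cor:splitting_times_conc}. This is precisely what is handled by replacing the bare almost sure limit with the tail-supremum bound: it is what converts the weak statement ``the tail of the sequence is eventually within $\eps/2$ of $R_\infty$'' into the strong statement ``all indices from $n_0$ onwards lie within $\eps$ of one another,'' with the loss of probability mass controlled by $\delta$.
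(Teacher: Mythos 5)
Your proof is correct and follows the route the paper intends: the authors state the corollary as an ``immediate'' consequence of Theorem~\ref{thm:splitting_times_convergence} without writing out the argument, and your telescoping identity $\tau_n-\tau_i-\tfrac1\lambda\log\tfrac ni=R_n-R_i$ combined with the tail-supremum characterisation of almost sure convergence and a triangle inequality through the limit $W$ is exactly the standard way to fill that in (and mirrors the triangle-inequality argument the paper does spell out for the neighbouring Lemma~\ref{lem:split_time_distr}). Nothing further is needed.
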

We denote the event in Corrollary~\ref{cor:splitting_times_conc} by $\mathcal{E}_{\eps,n_0,n}$.

Theorem~\ref{thm:splitting_times_convergence} also allows us to control the time of the $n$th split.
\begin{lem}
\label{lem:split_time_distr}
For any $\delta > 0$ there exists some positive constant $C$ and $n_1 \in \mathbb {N}$ such that for $n \geq n_1$ we have
\begin{equation} \label{eq:split_time_distr}
\Prob[\C{0}{}{}] { \left | \tau_n - \frac{\log n}{\lambda} \right | > C } \leq \delta. 
\end{equation}
\end{lem}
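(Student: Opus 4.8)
The plan is to read off the desired uniform tail bound directly from the almost sure convergence in Theorem~\ref{thm:splitting_times_convergence}. Write $Z_n := \tau_n - \frac{\log n}{\lambda}$, and let $\xi$ denote the almost surely finite random variable to which $Z_n$ converges almost surely. The whole content of the lemma is to upgrade this pathwise convergence to a statement that is uniform in $n$, i.e.\ to a tightness statement for the family $(Z_n)_{n\geq 1}$.

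First I would note that along almost every sample path the real sequence $(Z_n(\omega))_{n\geq 1}$ converges to the finite value $\xi(\omega)$, and a convergent real sequence is bounded. Hence the random variable
\[
S := \sup_{n\geq 1} |Z_n|
\]
is finite almost surely. Here $S$ is measurable, being a countable supremum of the (finite) random variables $|Z_n|$, so this makes sense; note also that each $\tau_n$, and therefore each $Z_n$, is finite almost surely, which is implicit in Theorem~\ref{thm:splitting_times_convergence}.

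Next, since $\Prob[\C{0}{}{}]{S = \infty} = 0$, we have $\Prob[\C{0}{}{}]{S > C} \to 0$ as $C \to \infty$. Thus, given $\delta > 0$, I would choose a constant $C = C(\delta)$ large enough that $\Prob[\C{0}{}{}]{S > C} \leq \delta$. Then for every $n \geq 1$,
\[
\Prob[\C{0}{}{}]{|Z_n| > C} \leq \Prob[\C{0}{}{}]{S > C} \leq \delta,
\]
which is exactly~\eqref{eq:split_time_distr}, and in fact holds for all $n$ (so one may take $n_1 = 1$).

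There is no genuine obstacle here beyond Theorem~\ref{thm:splitting_times_convergence} itself; the only point requiring a little care is that one must control the supremum over the entire sequence, rather than the limit $\xi$ alone, so that the single constant $C$ works simultaneously for all $n$. This is precisely the passage from pathwise boundedness to a uniform-in-$n$ probabilistic tail bound. Equivalently, one could argue that $Z_n$ converges in distribution to the (tight) limit $\xi$, and invoke the standard fact that any sequence convergent in distribution is tight; this yields the same conclusion.
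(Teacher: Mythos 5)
Your argument is correct, and it rests on exactly the same input as the paper's, namely Theorem~\ref{thm:splitting_times_convergence}; the packaging, however, is slightly different. The paper introduces the almost sure limit $W$ of $\tau_n - \log n/\lambda$, picks $K$ with $\Prob[\C{0}{}{}]{W > K}\leq \delta/2$, then picks $n_1$ so that $\Prob[\C{0}{}{}]{|W-(\tau_n-\log n/\lambda)|>K}\leq \delta/2$ for $n\geq n_1$, and concludes by the triangle inequality with $C=2K$ (doing one tail explicitly and remarking that the other is analogous). You instead observe that along almost every path the convergent sequence $Z_n=\tau_n-\log n/\lambda$ is bounded, so $S=\sup_n|Z_n|$ is an almost surely finite random variable, and then choose $C$ with $\Prob[\C{0}{}{}]{S>C}\leq\delta$. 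Your route buys a statement that is uniform over all $n\geq 1$ (so $n_1=1$), handles both tails simultaneously, and avoids the $\delta/2+\delta/2$ splitting; the paper's route is the more standard ``limit plus deviation from the limit'' decomposition and only needs finiteness of the limit $W$ rather than of the whole supremum. The one point you rightly flag --- that each $\tau_n$, hence each $Z_n$, is finite almost surely, so that $S$ is a genuine (a.s.\ finite) random variable --- is needed for your version and holds here because the split times of the embedded branching process are almost surely finite. Both proofs are valid; yours is marginally stronger and arguably cleaner.
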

\begin{proof}
Let us show that by taking $C$ sufficiently large the probability that $\tau_n > \frac{\log n}{\lambda} + C$ can be made arbitrarily small for large $n$. The opposite direction then follows analogously.

Let $W$ be the random variable that is the a.s.-limit of $\tau_n-\log n / \lambda$, which is finite almost surely by Theorem~\ref{thm:splitting_times_convergence}, and let $K$ be such that
 \[
  \Prob[\C{0}{}{}] { W > K } \leq \delta / 2.
 \]
 Let $n_1$ be such that for $n \geq n_1$ we have
 \[
  \Prob[\C{0}{}{}] { \left | W - \left (\tau_n- \frac{\log n}{\lambda} \right ) \right | > K } \leq \delta/2.
 \]
 Then by the triangle inequality for all $n \geq n_1$  we have that
\[
  \Prob[\C{0}{}{}] { \tau_n- \frac{\log n}{\lambda} > 2K } \leq \Prob[\C{0}{}{}] { W > K } + \Prob[\C{0}{}{}] { \left | W - \left (\tau_n- \frac{\log n}{\lambda} \right ) \right | > K } \leq \delta.
\]
 
\end{proof}

Corollary~\ref{cor:splitting_times_conc} and Lemma~\ref{lem:split_time_distr} allow us to deduce a bound on the lower tail of the distribution of the degree of a vertex.

\begin{lem}
\label{lem:degree_lower_bound}
Let $i_0$ be a vertex of weight $x$. Then for any $\delta > 0$ there exists some positive constant $c$ such that for $n$ large enough we have
\begin{equation}
\label{eq:degree_lower_bound}
\Prob[\C{0}{}{}] { D_{n} (i_0) > c \left( \frac{n}{i_0} \right )^{\lambda_x / \lambda} } > 1-\delta. 
\end{equation}
\end{lem}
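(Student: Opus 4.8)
The plan is to compare, through the common continuous-time embedding, the growth of the star of $i_0$ with the global clock. Recall that in this embedding every face carries an exponential clock of rate equal to its fitness, the $n$th global split occurs at time $\tau_n$, and the faces incident to $i_0$ evolve exactly as the urn attached to an $x\star$-process started at the birth time $\tau_{i_0}$ of $i_0$. Writing $N^\star(s)$ for the number of vertices that attach to $i_0$ during the first $s$ units of continuous time after its birth, I would record the identity $D_n(i_0) = d + N^\star(\tau_n - \tau_{i_0})$, valid because each split in the star adds exactly one new neighbour of $i_0$ and $i_0$ is born with $d$ neighbours. Thus it suffices to bound the elapsed time $\tau_n-\tau_{i_0}$ and the counting function $N^\star$ from below.

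For the elapsed time, since $i_0 \le n$, Corollary~\ref{cor:splitting_times_conc} (or, when $i_0$ is fixed, Lemma~\ref{lem:split_time_distr} together with the a.s.\ finiteness of $\tau_{i_0}$ provided by Theorem~\ref{thm:splitting_times_convergence}) gives, for any $\eps>0$, an event of probability at least $1-\delta/2$ on which
\[
\tau_n - \tau_{i_0} \ge \tfrac1\lambda \log\tfrac{n}{i_0} - \eps .
\]
When $n/i_0$ stays bounded the conclusion is immediate since $D_n(i_0)\ge d$, so I may assume $\tau_n-\tau_{i_0}\to\infty$. For $N^\star$, I would use that the urn of the $x\star$-process is an irreducible generalised P\'olya urn whose generator has maximal eigenvalue $\lambda_x$ (the quantity identified in Proposition~\ref{prop:MC*}). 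Applying Theorem~\ref{thm:splitting_times_convergence} to this urn, the time $\sigma^\star_m$ of its $m$th split satisfies $\sigma^\star_m - \tfrac{1}{\lambda_x}\log m \to W^\star$ almost surely for a finite $W^\star$; as a convergent sequence is bounded, for any $\delta$ there is a constant $C_x$ with
\[
\Prob[\C{0}{}{}]{ \sigma^\star_m \le \tfrac{1}{\lambda_x}\log m + C_x \ \text{ for all } m } \ge 1-\delta/2 .
\]
On this event, for each $s$ the choice $m=\lfloor e^{\lambda_x(s-C_x)}\rfloor$ forces $\tfrac{1}{\lambda_x}\log m + C_x\le s$, hence $\sigma^\star_m\le s$, giving $N^\star(s) \ge \tfrac12 e^{-\lambda_x C_x} e^{\lambda_x s}=:c_1 e^{\lambda_x s}$ for all $s$ large.

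Finally I would intersect the two high-probability events (total probability $>1-\delta$) and substitute the random argument $s=\tau_n-\tau_{i_0}$:
\[
D_n(i_0) > N^\star(\tau_n-\tau_{i_0}) \ge c_1 e^{\lambda_x(\tau_n-\tau_{i_0})} \ge c_1 e^{-\lambda_x\eps}\Big(\tfrac{n}{i_0}\Big)^{\lambda_x/\lambda},
\]
so the claim holds with $c=c_1 e^{-\lambda_x\eps}$.

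The main obstacle is the lower bound on $N^\star$. The moment estimates of Lemma~\ref{lem:rth_norm} and~\eqref{eq:rth_moment} control only the \emph{upper} tail of the degree, through Markov's inequality, so they are useless here; the lower tail must instead be extracted from the almost sure exponential growth guaranteed by the Athreya--Karlin theory. The delicate point is that the split-time bound on $\sigma^\star_m$ has to be made uniform in $m$ (which is why I pass through a.s.\ convergence and boundedness of the whole sequence rather than a bound at a single index), so that it survives evaluation at the random time $\tau_n-\tau_{i_0}$; aligning this with the continuous-time estimate for $\tau_n-\tau_{i_0}$ is then a routine union bound.
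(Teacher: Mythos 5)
Your proposal is correct and follows essentially the same route as the paper: both arguments work in the continuous-time embedding, lower-bound the elapsed time $\tau_n-\tau_{i_0}$ via Corollary~\ref{cor:splitting_times_conc}, and upper-bound the split times of the companion $x\star$-urn via the Athreya--Karlin convergence, then intersect the two events. The only cosmetic difference is that you make the star's split-time bound uniform in $m$ so it can be evaluated at the random elapsed time, whereas the paper fixes the target number of splits $\alpha(n/\max\{n_2,i_0\})^{\lambda_x/\lambda}$ first and applies Lemma~\ref{lem:split_time_distr} at that single deterministic index.
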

\begin{proof}
 Let $n_2 = \max \{n_0,n_1\}$, where $n_0$ and $n_1$ are as in Corollary~\ref{cor:splitting_times_conc} and Lemma~\ref{lem:split_time_distr} respectively. By Corollary~\ref{cor:splitting_times_conc} we know that there is some $\eps > 0$ such that with probability at least $1-\delta/2$ we have
 \[
 \tau_n - \tau_{\max \{n_2, i_0 \}} > \frac{1}{\lambda} \log \frac{n}{\max \{n_2, i_0 \}} - \eps.
 \]
 Let $\tau^\star_j$ denote the moment of the $j$th split in the continuous time embedding of the generalised P\'olya urn scheme process we can couple with the companion $x\star$-process associated with vertex~$i_0$. By Lemma~\ref{lem:split_time_distr} we obtain that for $\alpha > 0$, if $n$ is large enough then with probability at least $1-\delta/2$ we have
 \begin{align*}
 \tau^\star_{\alpha (n/\max \{n_2, i_0 \})^{\lambda_x/\lambda}} & \leq \frac{\log \left ( \alpha \left ( \frac{n}{\max \{n_2, i_0 \})} \right )^{\lambda_x/\lambda} \right)}{\lambda_x} + C \\
 & = \frac{1}{\lambda} \log \frac{n}{\max \{n_2, i_0 \}} + \frac{1}{\lambda_x}\log \alpha + C.
 \end{align*}
 Hence, if $\alpha > 0$ is small enough then by the union bound with probability at least $1-\delta$ we have
 \begin{align*}
 \tau^\star_{\alpha (n/\max \{n_2, i_0 \})^{\lambda_x/\lambda}} & \leq \frac{1}{\lambda} \log \frac{n}{\max \{n_2, i_0 \}} + \frac{1}{\lambda_x}\log \alpha + C \\
 & \leq \frac{1}{\lambda} \log \frac{n}{\max \{n_2, i_0 \}} - \eps \\
 & \leq \tau_n - \tau_{\max \{n_2, i_0 \}}.
 \end{align*}
 Hence, in the continuous time embedding of the urn scheme, the time from the $\max \{n_2, i_0 \}$th split to the $n$th split is longer than the time from the birth of the vertex $i_0$ to its degree reaching value at least $\alpha (n/\max \{n_2, i_0 \})^{\lambda_x/\lambda}$.
 
 Recalling that $n_2$ is a constant we then see that there is some constant $c$ such that
 \[
 \Prob[\C{0}{}{}] { D_{n} (i_0) > c \left( \frac{n}{i_0} \right )^{\lambda_x / \lambda} } > 1-\delta 
 \]
 as desired.
 \end{proof}
 
Analogously to Lemma \ref{lem:degree_lower_bound}, we can prove the following bound on the upper tail of the distribution of $D_{n} (i)$.
\begin{lem}
\label{lem:degree_upper_bound}
Let $i_0$ be a vertex of weight $x$. Then for any $\delta > 0$ there exists some positive constant $C$ such that for $n$ large enough we have
\begin{equation}
\label{eq:degree_upper_bound}
\Prob[\C{0}{}{}] { D_{n} (i_0) < C \left( \frac{n}{i_0} \right )^{\lambda_x / \lambda} } > 1-\delta. 
\end{equation}
\end{lem}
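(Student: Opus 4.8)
The plan is to run the proof of Lemma~\ref{lem:degree_lower_bound} in reverse, inverting every inequality. The structural fact I rely on is the same coupling used there: in the continuous-time embedding, the star of $i_0$ evolves as an independent generalised P\'olya urn (the companion $x\star$-process) whose $j$th split occurs at time $\tau^\star_j$ measured from the birth $\tau_{i_0}$ of $i_0$, and the degree of $i_0$ at the $n$th split of the whole complex equals $d$ plus the number of $\star$-splits that have occurred within the elapsed window $\tau_n-\tau_{i_0}$. Thus bounding $D_n(i_0)$ from above is exactly the statement that, for a large constant $\beta$ and $m_0:=(n/\max\{n_2,i_0\})^{\lambda_x/\lambda}$, fewer than $\beta m_0$ splits of the $\star$-process have taken place by time $\tau_n-\tau_{i_0}$, i.e.\ that $\tau^\star_{\beta m_0} > \tau_n-\tau_{i_0}$.

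First I would fix $n_2=\max\{n_0,n_1\}$ exactly as before. I would invoke Corollary~\ref{cor:splitting_times_conc} for the whole complex, now using the \emph{upper} side of its two-sided bound, to obtain $\eps>0$ such that with probability at least $1-\delta/2$,
\[
\tau_n - \tau_{\max\{n_2, i_0\}} < \frac{1}{\lambda}\log\frac{n}{\max\{n_2, i_0\}} + \eps.
\]
Next I would apply Lemma~\ref{lem:split_time_distr} to the companion $\star$-process, which is itself a generalised P\'olya urn with maximal eigenvalue $\lambda_x$, so that Theorem~\ref{thm:splitting_times_convergence} and hence the lemma hold verbatim with $\lambda$ replaced by $\lambda_x$. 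This time I would use its \emph{lower} tail: for $\beta>0$ and $n$ large, with probability at least $1-\delta/2$,
\[
\tau^\star_{\beta(n/\max\{n_2, i_0\})^{\lambda_x/\lambda}} \geq \frac{1}{\lambda}\log\frac{n}{\max\{n_2, i_0\}} + \frac{1}{\lambda_x}\log\beta - C.
\]

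Then I would choose $\beta$ large enough that $\tfrac{1}{\lambda_x}\log\beta - C > \eps$, so that a union bound yields, with probability at least $1-\delta$,
\[
\tau^\star_{\beta(n/\max\{n_2, i_0\})^{\lambda_x/\lambda}} > \frac{1}{\lambda}\log\frac{n}{\max\{n_2, i_0\}} + \eps > \tau_n - \tau_{\max\{n_2, i_0\}}.
\]
Hence strictly fewer than $\beta m_0$ splits of the $\star$-process have occurred by the $n$th split of the complex, giving $D_n(i_0) < d + \beta m_0$; since $n_2$ is a constant, $\beta m_0 \leq \beta(n/i_0)^{\lambda_x/\lambda}$ and everything is absorbed into a single constant $C$ to obtain $D_n(i_0) < C(n/i_0)^{\lambda_x/\lambda}$.

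I do not expect a genuine obstacle, since the a.s.\ convergence of $\tau^\star_m - (\log m)/\lambda_x$ gives two-sided concentration, so the lower tail of $\tau^\star_m$ is controlled exactly as easily as its upper tail was in Lemma~\ref{lem:degree_lower_bound}. The one point of genuine care, and the only place the upper bound is marginally more delicate than the lower bound, is the regime $i_0 < n_2$: there the displayed bound on $\tau_n-\tau_{\max\{n_2,i_0\}}$ controls the elapsed time only up to the correction $\tau_{\max\{n_2,i_0\}}-\tau_{i_0}$, whereas for the lower bound this correction only \emph{helped}. For fixed $i_0$, however, $\tau_{\max\{n_2,i_0\}}-\tau_{i_0}$ is an a.s.\ finite random variable not depending on $n$, so with probability at least $1-\delta$ it is bounded by a constant, and enlarging $\beta$ to beat $\eps$ plus this constant closes the argument.
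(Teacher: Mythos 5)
Your proposal is correct and is precisely the argument the paper intends: the paper omits the proof of Lemma~\ref{lem:degree_upper_bound}, declaring it analogous to Lemma~\ref{lem:degree_lower_bound}, and your mirrored version (upper side of Corollary~\ref{cor:splitting_times_conc}, lower tail of Lemma~\ref{lem:split_time_distr} for the companion $x\star$-process, $\beta$ chosen large rather than $\alpha$ small) is exactly that analogue. You also correctly isolate the one genuine asymmetry, namely that for $i_0 < n_2$ the correction $\tau_{\max\{n_2,i_0\}}-\tau_{i_0}$ now works against you, and your fix (it is an a.s.\ finite random variable independent of $n$, so absorb it into the choice of $\beta$) is sound, modulo trivially splitting the probability budget three ways instead of two.
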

\begin{flushright}$\Box$\end{flushright}

We will use Corollary~\ref{cor:splitting_times_conc} in order to bound the $r$th moment of the degree of vertex $i_0$ at 
the time the $n$th vertex arrives, that is, at $\tau_n$. 
In particular, for $n_0 \leq i_0 \leq n$ we have the following inequality: 
\begin{align*}
\E[\C{0}{}{}]{\mathbf 1 (\mathcal{E}_{\eps, n_0,n}) D_{\tau_n - \tau_{i_0}}^r (i_0)} & \leq 
\E[\C{0}{}{}]{\mathbf 1 (\mathcal{E}_{\eps,n_0,n}) D_{\frac{1}{\lambda} \ln \left( \frac{n}{i_0}\right)+\eps}^r (i_0)}  \\
&\leq \E[\C{0}{}{}]{D_{\frac{1}{\lambda} \ln \left( \frac{n}{i_0}\right)+\eps}^r (i_0)} \\
& \stackrel{\eqref{eq:rth_moment}} {\leq} (2dq)^r C e^{r\frac{\lambda_x}{\lambda} \ln \left( \frac{n}{i_0}\right) + r \lambda_x \eps} \\
&\leq C^* \cdot \left(\frac{n}{i_0}\right)^{\frac{r\lambda_x}{\lambda}},
\end{align*} 
where $C^*=(2dq)^r C e^{r\lambda_x \eps}$.
Thus, recalling that $\lambda^\star = \max_{x \in \supp (\mu)} \lambda_x$, we deduce that 
for all $n_0 \leq i_0 \leq n$ we have 
\begin{equation} \label{eq:rthmom_degree}
\E[\C{0}{}{}]{\mathbf 1 (\mathcal{E}_{\eps, n_0,n}) D_{\tau_n - \tau_{i_0}}^r (i_0)} \leq 
C^* \cdot \left(\frac{n}{i_0}\right)^{\frac{r\lambda^\star}{\lambda}}.
\end{equation}

We will use this bound later to complete a first moment argument towards the proof of the subcritical case in Theorem~\ref{thm:main}. 

\section{Proof of Theorem~\ref{thm:main}}
\label{sec:mainResult}

\subsection{Supercritical case}

The idea of the proof of the supercritical case is as follows. First, we wait until a $(d-1)$-dimensional face $\sigma$ consisting of $d$ vertices of weight $x_{max}$, with $\lambda_{x_{max}} = \lambda^\star$, is created. Then, we observe the companion process associated with a fixed vertex $v_0 \in \sigma$. By Lemma \ref{lem:degree_lower_bound}, with probability close to $1$ the degree of $v_0$ by time $n/d$ is at least $c_1 \cdot n^{\lambda^\star / \lambda}$ for some constant $c_1 > 0$, implying that at time $n/d$ the star of vertex $v_0$ contains at least $c_2 \cdot n^{\lambda^\star / \lambda}$ active $(d-1)$-dimensional faces. Next, we look at the faces in the star of $v_0$ that were active at time $n/d$, and we show that some positive proportion of these will be subdivided at least once in the time interval $(n/d, 2n/d]$, with every such face $\sigma_0$ upon subdivision producing an active face having at most $d-1$ vertices (including $v_0$) in common with $\sigma_0$. We then show that a positive proportion of these faces will be subdivided in the time interval $(2n/d, 3n/d]$, producing an active face having at most $d-2$ vertices (including $v_0$) in common with $\sigma_0$. We repeat this argument $d-1$ times, consequently producing at least $c \cdot n^{\lambda^\star / \lambda}$ faces at time $n$ that all contain $v_0$, and are otherwise pairwise disjoint. Then, a simple concentration argument shows that if $p \gg n^{-\lambda^\star/(k \lambda)}$ then $v_0$ will be contained in at least $r$ $k$-dimensional (and otherwise disjoint) faces that are critical with respect to the initially infected set, with $v_0$ being the only initially uninfected vertex in these faces. Hence $v_0$ becomes infected in one step, and by the union bound with probability close to $1$ the same applies to the other vertices of $\sigma$. Once the $(d-1)$-dimensional face $\sigma$ is infected, by the fact that $d \geq r k$, the infection spreads to every vertex of the complex.

\begin{proof}[Proof of the upper bound in Theorem~\ref{thm:main}]

Let $\rho=(\rho_1,\ldots, \rho_q)$ be as in Theorem~\ref{thm:as_convergence}, let $\delta > 0$, and let
\[
\eps = \min_{1 \leq i \leq q} \rho_i/2 > 0.
\]
Let $X_\ell$ denote the composition vector of the generalised P\'olya urn that corresponds to $\C{\ell}{}{}$, for any 
$\ell \in \mathbb{N}_0$. 
By the almost sure convergence of $X_\ell / \ell$ to $\rho$, as $\ell \to \infty$, we know that for any $\delta > 0$ there is some $M_{\eps,\delta}$ such that for all $\ell \geq M_{\varepsilon,\delta}$ with probability at least $1-\delta/2$ we have $\| X_\ell/\ell-\rho \|_\infty \leq \eps$, and consequently $\Ac{\ell}$ contains at least one face consisting only of vertices of weight $x_{max}$. Hence, let $\sigma \in \Ac{M_{\eps,\delta}}$ be such a face.

Let $v_0 \in \sigma$ be fixed, and let $\mathcal{A}^{\star}_{\ell}$ denote the set of active faces in $\St{v_0}{\C{\ell}{}{}}$, the star of $v_0$ after step $\ell$. By Lemma~\ref{lem:degree_lower_bound}, there exists a $\beta > 0$ such that with probability at least $1-\delta/(2d)$ we have $D_{n/d} (v_0) > \beta ( n/v_0 )^{\lambda^\star / \lambda}$, which implies that for some constant $\alpha_0 > 0$ at time $n/d$ vertex $v_0$ is in at least 
$\alpha_0 n^{\lambda^\star / \lambda}$ different $(d-1)$-dimensional faces contained in $\mathcal{A}^{\star}_{n/d}$. We would like to claim that with high probability there are at least $r$ pairwise disjoint (excluding $v_0$) such faces, in which all vertices other than $v_0$ are initially infected; however, some neighbours of $v_0$ could be contained in a large number of faces in $\St{v_0}{\C{n/d}{}{}}$, making it harder to find the desired structure.

The total fitness of the faces in $\mathcal{A}^{\star}_{n/d}$ is therefore at least $\alpha_0 n^{\lambda^\star / \lambda} f'$, where $f'$ is the minimum fitness of a face containing a vertex of weight $x_{max}$ (in our case $v_0$). We will call the faces  in $\mathcal{A}^{\star}_{n/d}$ \emph{$0$-new}; let $\mathcal{B}_{0}$ denote the set of $0$-new faces. We claim that in the time interval $(n/d, 2n/d]$ at least some constant proportion of the $0$-new faces becomes subdivided at least once.

\begin{clm} \label{clm:subdivisions}
Given that $|\mathcal{B}_0| \geq \alpha_0 n^{\lambda^\star /\lambda}$, there is some $\alpha_1 > 0$ such that w.h.p. at least 
$\alpha_1 n^{\lambda^\star /\lambda}$ members of $\mathcal{B}_0$ are subdivided during the interval 
$(n/d,2n/d]$.
\end{clm}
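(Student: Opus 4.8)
The plan is to pass to the continuous-time embedding of the generalised P\'olya urn and to exploit the independence of the exponential clocks attached to the active faces. Recall that in this embedding a ball (active face) of fitness $a$ splits according to an independent rate-$a$ Poisson clock, and that the $m$th split occurs at the continuous time $\tau_m$. I would condition on the history of the process up to the stopping time $\tau_{n/d}$ at which $\mathcal{B}_0$ is determined; by hypothesis this history lies in the event $\{|\mathcal{B}_0| \geq \alpha_0 n^{\lambda^\star/\lambda}\}$. Every face $\tau \in \mathcal{B}_0$ is active at time $\tau_{n/d}$ and, since it contains the weight-$x_{max}$ vertex $v_0$, has fitness at least $f' > 0$. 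By the strong Markov property of the branching process, given the configuration at $\tau_{n/d}$ the waiting time to the first split of each such $\tau$ after $\tau_{n/d}$ is an independent $\mathrm{Exp}(f(\tau))$ variable (this is all we use, so it is irrelevant whether $\tau$ survives its split as in Model \textbf{A} or is removed as in Model \textbf{B}).

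First I would fix a \emph{deterministic} window length $\Delta := \tfrac{\log 2}{2\lambda} \in \bigl(0, \tfrac{\log 2}{\lambda}\bigr)$, which is legitimate since $\lambda > 0$ by Proposition~\ref{prop:MC}. For a single $\tau \in \mathcal{B}_0$ the probability that it splits at least once during the continuous interval $(\tau_{n/d}, \tau_{n/d}+\Delta]$ equals $1 - e^{-f(\tau)\Delta} \geq 1 - e^{-f'\Delta} =: \gamma > 0$. Crucially, the first-split times of distinct members of $\mathcal{B}_0$ are mutually independent, so the number $R$ of faces of $\mathcal{B}_0$ that split in this interval stochastically dominates a sum of $|\mathcal{B}_0|$ independent $\mathrm{Bernoulli}(\gamma)$ variables. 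Since $|\mathcal{B}_0| \geq \alpha_0 n^{\lambda^\star/\lambda} \to \infty$, a Chernoff bound gives $R \geq \tfrac{\gamma}{2}|\mathcal{B}_0| \geq \tfrac{\gamma}{2}\alpha_0 n^{\lambda^\star/\lambda}$ with high probability.

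It then remains to translate this continuous window back into the discrete step range $(n/d, 2n/d]$. Applying Corollary~\ref{cor:splitting_times_conc} with $i = n/d$ and with $i = 2n/d$ yields, with high probability, $\tau_{2n/d} - \tau_{n/d} = \tfrac{1}{\lambda}\log 2 \pm 2\eps$; choosing $\eps$ small enough that $\tfrac{\log 2}{\lambda} - 2\eps > \Delta$ forces $\tau_{n/d}+\Delta < \tau_{2n/d}$ with high probability. On the intersection of this event with $\{R \geq \tfrac{\gamma}{2}\alpha_0 n^{\lambda^\star/\lambda}\}$, every split counted by $R$ occurs at a continuous time strictly below $\tau_{2n/d}$, hence is the $m$th split for some $m \in (n/d, 2n/d]$; as these are subdivisions of distinct members of $\mathcal{B}_0$, the claim follows with $\alpha_1 := \tfrac{\gamma}{2}\alpha_0$.

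The main obstacle is precisely this last coupling. The times $\tau_{n/d}$ and $\tau_{2n/d}$ delimiting the target window are themselves random, being determined by all the clocks, so one cannot directly assert independence of the subdivision events relative to the random window $(\tau_{n/d}, \tau_{2n/d}]$. The device that resolves this is to fit a deterministic-length sub-window $(\tau_{n/d}, \tau_{n/d}+\Delta]$ inside it: short enough to lie below $\tau_{2n/d}$ with high probability by the split-time concentration of Corollary~\ref{cor:splitting_times_conc}, yet long enough that each face carries a constant splitting probability $\gamma$. This decouples the Chernoff/independence argument from the random time change, and the only further routine point to verify is that conditioning on the configuration at the stopping time $\tau_{n/d}$ leaves the forward clocks independent, which is the branching (strong Markov) property underlying Theorem~\ref{thm:as_convergence}.
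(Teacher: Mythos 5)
Your proof is correct, but it follows a genuinely different route from the paper's. The paper stays entirely in discrete time: it observes that, as long as at most half of the $0$-new faces have been subdivided, the residual total fitness of the unsubdivided ones is at least $\alpha_0 n^{\lambda^\star/\lambda}f'/2$, while the total fitness of \emph{all} active faces at any step $\ell<2n/d$ is deterministically at most $4n\tilde f$; hence each discrete step subdivides an unsubdivided $0$-new face with probability at least $p\asymp n^{\lambda^\star/\lambda-1}$, the epochs between consecutive such subdivisions are stochastically dominated by independent geometrics of parameter $p$, and Chebyshev's inequality applied to their sum (with $\alpha_1$ chosen so that the expected sum is below $n/d$) finishes the argument. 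That route needs no information about the split times $\tau_m$ and no appeal to the continuous-time embedding. You instead work in the Athreya--Karlin embedding, exploit the mutual independence of the residual exponential clocks of the faces of $\mathcal{B}_0$ after the stopping time $\tau_{n/d}$, and fit a deterministic-length sub-window inside $(\tau_{n/d},\tau_{2n/d}]$ via Corollary~\ref{cor:splitting_times_conc}. What your version buys is genuine independence and hence a Chernoff (exponentially small) failure probability in place of the paper's Chebyshev bound, and it bypasses the need to argue that the per-step hitting probability stays bounded below throughout the interval; the price is the extra input of split-time concentration, plus the (routine but necessary) justification that conditioning on $\mathcal{F}_{\tau_{n/d}}$ leaves the forward clocks independent and that a split at a continuous time in $(\tau_{n/d},\tau_{2n/d})$ is indeed the $m$th split for some $m\in(n/d,2n/d]$. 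Two small points to tidy up: your window argument delivers, for every $\delta>0$, failure probability at most $\delta+o(1)$, which does yield the stated ``w.h.p.''\ but should be said explicitly; and you reuse the symbol $\tau$ both for a face of $\mathcal{B}_0$ and for the split times, which should be disambiguated.
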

\begin{proof}[Proof of Claim~\ref{clm:subdivisions}]
Indeed, until at least $\alpha_0 n^{\lambda^\star / \lambda} / 2$ of the $0$-new faces become subdivided at least once, the total fitness of such faces that have not been subdivided at all is at least $\alpha_0 n^{\lambda^\star / \lambda} f' / 2$. On the other hand, recalling that $|\Ac{\ell}| \leq d\ell+d+1 \leq (d+1)\ell < 2d \ell$, for any $n/d \leq \ell < 2n/d$ the total fitness of the faces in $\mathcal{A}_{\ell}$ is at most $4n \tilde{f}$, where $\tilde{f} = \max_{x \in \cCd} f(x)$.

For $j\geq 1$, let $\tau_j > n/d$ be the stopping time of the $j$th subdivision of a $0$-new face after step $n/d$. 
(Also, set $\tau_0 := \lfloor n/d \rfloor$.) 
Set $E_j = \tau_j - \tau_{j-1}$ to be the number of steps between the $(j-1)$th subdivision of a $0$-new face and 
the $j$th subdivision of a $0$-new face within the interval $(n/d,2n/d]$ - we call this the duration of the $j$th epoch. 
The above observation implies that for $j\leq \alpha_0 n^{\lambda^\star / \lambda} / 2$, 
each subdivision that takes place inside the $j$th epoch is a subdivision of a $0$-new face with probability 
at least $p:=\left(\alpha_0 n^{\lambda^\star / \lambda} f' / 2\right)/ (4n \tilde{f})$ uniformly over the history of the 
process up to that point. 
Thus, for such $j$, we have $\Prob[\C{n/d}{}{}]{\tau_j - \tau_{j-1} \geq k} \leq (1-p)^k$.
We conclude that the random variable $E_j$ is stochastically bounded from above by 
a geometrically distributed random 
variable with parameter equal to $p$, which we 
denote by $Y_j$. Furthermore, the collection $Y_j$, $j\leq  \alpha_0 n^{\lambda^\star / \lambda} / 2$, form 
an independent family.  

Now, let $\alpha_1 < \frac{\alpha_0}{8}\frac{f'}{\tilde{f}}$ and assume that 
$\C{n/d}{}{}$ has at least $\alpha_0 n^{\lambda^\star /\lambda}$ $0$-new faces in the star of $v_0$. 
The stochastic domination yields  
$\Prob[\C{n/d}{}{}] {\sum_{j\leq \alpha_1 n^{\lambda^\star / \lambda}} E_j > n/d} \leq 
\mathbb{P} \left( \sum_{j\leq \alpha_1 n^{\lambda^\star / \lambda}} Y_j > n/d\right)$.
But $\mathbb{E} \left[\sum_{j\leq \alpha_1 n^{\lambda^\star / \lambda}} Y_j  \right] \leq \frac{8\alpha_1}{\alpha_0} 
\frac{\tilde{f}}{f'} n$. 
If $\alpha_1$ is small enough so that $\frac{8\alpha_1}{\alpha_0} 
\frac{\tilde{f}}{f'}  < 1/d$, Chebyschev's inequality implies that the above probability is $o(1)$. 
In other words, with probability $1-o(1)$, at least $\alpha_1 n^{\lambda^\star / \lambda}$ $0$-new faces
are subdivided during the interval $(n/d, 2n/d]$. 
\end{proof}

Hence, given any $\sigma^0 \in \mathcal{B}_{0}$, label the vertices of $\sigma^0 \setminus \{v_0\}$ as $v_1, \ldots, v_{d-1}$ in an arbitrary way (vertices that belong to multiple faces in $\mathcal{B}_{0}$ receive one label from each of these faces). If $\sigma^0$ is subdivided for the first time in the time interval $(n/d, 2n/d]$, label the vertex that subdivides $\sigma^0$ with $w_1$, and let $\sigma^1 = (v_0,w_1,v_2,\ldots,v_{d-1})$; we say that $\sigma^1$ is $1$-new, and we denote the set of $1$-new faces with $\mathcal{B}_{1}$. 
If $\sigma^1$ is subdivided during the time interval $(n/d, 2n/d]$, we remove $\sigma^1$ from $\mathcal{B}_{1}$ and we simply relocate the labels: we remove the label from the vertex $w_1$, we place it on the vertex that subdivided the face $\sigma^1$. We then remove the label from the face $\sigma^1$, and finally we again define $\sigma^1 = (v_0,w_1,v_2,\ldots,v_{d-1})$, which we declare $1$-new and place it in $\mathcal{B}_{1}$. We repeat this procedure whenever a face in $\mathcal{B}_{1}$ becomes subdivided in the time interval $(n/d, 2n/d]$. Note that $\sigma^1$ is the unique $1$-new face that contains the vertex $w_1$.

As in the above claim, one can show that there is some constant $\alpha_2 > 0$ such that w.h.p. at least $\alpha_2 n^{\lambda^\star / \lambda}$ faces in $\mathcal{B}_{1}$ are subdivided at least once during the time interval 
$(2n/d, 3n/d]$. When a $1$-new face $\sigma^1 = (v_0,w_1,v_2,\ldots, v_{d-1}) \in \mathcal{B}_{1}$ becomes subdivided for the first time in that interval, we label the vertex that subdivided it $w_2$, we define $\sigma^2 = (v_0,w_1,w_2,\sigma_3,\ldots,\sigma_{d-1})$, and we declare $\sigma^2$ to be a $2$-new face; we denote the set of $2$-new faces with $\mathcal{B}_{2}$. We then proceed as in the case of the $1$-new faces - if a $2$-new face $\sigma^2 \in \mathcal{B}_{2}$ is subdivided in the time interval $(2n/d, 3n/d]$, we move the label $w_2$ to the new vertex and we again redefine $\sigma^2$. Consequently, at time $3n/d$ we have at least $\alpha_2 n^{\lambda^\star / \lambda}$ faces in $\mathcal{B}_{2}$ with the additional property that any such face $\sigma^2 = (v_0,w_1,w_2,\sigma_3,\ldots,\sigma_{d-1})$ is the unique $2$-new face containing any of the vertices 
$w_1,w_2$.

For $3 \leq i \leq d-1$, we proceed in a similar fashion in the time intervals $(in/d, (i+1)n/d]$, w.h.p. obtaining the set $\mathcal{B}_{i}$ of at least $\alpha_i n^{\lambda^\star / \lambda}$ $i$-new faces, with the property that each such face contains at least $i$ vertices that do not belong to any other $i$-new face. Taking $i = d-1$ we see that for some $\alpha_{d-1} > 0$, at time $n$ vertex $v_0$ belongs to at least $\alpha_{d-1} n^{\lambda^\star / \lambda}$ faces in $\mathcal{B}_{d-1}$ that are disjoint apart from containing $v_0$. Since in the supercritical regime we have $p \gg n^{-\lambda^\star/(k \lambda)}$, by Chernoff bounds we immediately obtain the fact that with high probability $v_0$ is contained in at least $r$ faces in $\mathcal{B}_{d-1}$ that are critical with respect to the initially infected set, with $v_0$ being the only initially uninfected vertex.

We continue the proof of the upper bound by taking the union bound over the vertices in the initially selected face $\sigma$ containing only vertices of weight $x_{max}$. Hence, for any $\delta > 0$, if $p \gg n^{-\lambda^\star/(k \lambda)}$, then with probability at least $1-\delta-o(1)$ after the first step of the bootstrap process the infected set contains a fully infected $(d-1)$-dimensional face. Since we have $d \geq rk$, in consecutive steps of the bootstrap process infection spreads from that face to the rest of the complex. Consequently, since $\delta > 0$ was arbitrary, percolation occurs with probability $1-o(1)$.

\end{proof}

\subsection{Subcritical case}

Next, we show that under the assumptions in Part 2 of Theorem \ref{thm:main} the initially infected set is stable with high probability, i.e., the infection does not spread at all, and in particular there is no percolation. Recalling that $A_t$ is the set of vertices that are infected at time $t$, this is equivalent to showing that $A_1 \setminus A_0 = \emptyset$.

\begin{proof}[Proof of the lower bound in Theorem~\ref{thm:main}]
Since $p = o(1)$, w.h.p. after the initial infection is seeded there are some initially healthy vertices in the complex, which need to be infected in the bootstrap percolation process. Let $v$ be a vertex in our complex and assume that $v$ is initially healthy but becomes infected in the first step of the process. Hence, the link $\Lk{v}{\C{n}{}{}}$ of $v$ contains $r$ pairwise disjoint $(k-1)$-dimensional fully infected faces. Crucially, by the definition of our complex, any such face must be contained fully in one of the $d$-dimensional simplices containing $v$, which is trivially at most the degree of $v$.

Hence, conditioned on the degree of $v$ at time $n$, the probability that $v$ becomes infected in the first step of the process is at most
\[
  \left (D_n(i)  \binom{d}{k} p^k \right )^r.
\]
Let $\delta >0$ and let $n_0 = n_0(\delta)$ be as in Corollary \ref{cor:splitting_times_conc}. Suppose that $p = n^{-\lambda^\star/(k \lambda)} / \omega(n)$ for some $\omega(n) \to \infty$ (note that this clearly holds if we have $p \ll n^{-\lambda^\star/(k \lambda)} (\log n)^{-1/rk}$). By Lemma \ref{lem:degree_upper_bound} and the union bound, there is some $C = C(\delta,n_0+|\C{0}{}{0}|)$ such that the  probability that all of the vertices in 
$\C{0}{}{0} \cup \{ 1, 2, \ldots, n_0\}$ have degree at most $C n^{\lambda^\star / \lambda}$ is at least $1- \delta$. Hence, the probability that at least one of them becomes infected in the first step of the bootstrap process is at most
\begin{align}
\label{eqn:early_vertices}
 & \Prob[\C{0}{}{}] { (A_1 \setminus A_0) \cap  \left(\C{0}{}{0} \cup \{1, \ldots, n_0\} \right) \neq \emptyset } 
 \nonumber \\
 & \qquad \leq \delta + (n_0+|\C{0}{}{0}|) \left ( C n^{\lambda^\star / \lambda} \binom{d}{k} p^{k} \right )^r \nonumber \\
 & \qquad \leq \delta + (n_0+|\C{0}{}{0}|) C' \left ( n^{\lambda^\star / \lambda} n^{-\lambda^\star/ \lambda} (\omega(n))^{-k} \right )^r \nonumber \\
 & \qquad = \delta + o(1).
\end{align}
For the vertices arriving after time $n_0$, we use \eqref{eq:rthmom_degree} to bound the probability that at least one of them becomes infected in the first step of the process. We have
\begin{align*}
 & \Prob[\C{0}{}{}] { (A_1 \setminus A_0) \cap \{n_0+1, \ldots, n \} \neq \emptyset } \\
 & \qquad \leq  \Prob[\C{0}{}{}] {\mathcal{E}_{\eps,n_0,n}^c}  +
 \Prob[\C{0}{}{}] {\mathcal{E}_{\eps,n_0,n}, \{ (A_1 \setminus A_0) \cap \{n_0+1, \ldots, n \} \neq \emptyset \} } \\
 &  \qquad \leq \Prob[\C{0}{}{}] {\mathcal{E}_{\eps,n_0,n}^c} + \sum_{i=n_0+1}^{n} \Prob[\C{0}{}{}] { \mathcal{E}_{\eps,n_0,n} , i \in A_1 \setminus A_0 } \\
 &  \qquad = \Prob[\C{0}{}{}] {\mathcal{E}_{\eps,n_0,n}^c} + \sum_{i=n_0+1}^{n} \E[\C{0}{}{}] { \mathbf 1 (\mathcal{E}_{\eps, n_0,n}) \mathbf 1 ( i \in A_1 \setminus A_0) }.
\end{align*}
Let $I^i_j$ be the event that the $j$th $r$-tuple of disjoint (excluding $i$) $k$-dimensional faces in $\St{i}{\C{n}{}{}}$ is critical with respect to the initially infected set. By Corollary \ref{cor:splitting_times_conc} we have $1 - \Prob[\C{0}{}{}] {\mathcal{E}_{\eps,n_0,n}} \leq \delta$, so we continue with
\begin{align*}
 \Prob[\C{0}{}{}] { (A_1 \setminus A_0) \cap \{n_0+1, \ldots, n \} \neq \emptyset } 
 & \leq \delta + \sum_{i=n_0+1}^{n} \E[\C{0}{}{}] { \mathbf 1 (\mathcal{E}_{\eps, n_0,n}) \mathbf 1 ( i \in A_1 \setminus A_0) ) } \\
 & \leq \delta + \sum_{i=n_0+1}^{n} \E[\C{0}{}{}] { \mathbf 1 (\mathcal{E}_{\eps, n_0,n}) \sum_{j = 1}^{\binom{d}{k}^r D_n^r(i)} \mathbf 1 (I^i_j) } \\
 & = \delta + \sum_{i=n_0+1}^{n} \E[\C{0}{}{}] { \mathbf 1 (\mathcal{E}_{\eps, n_0,n}) \binom{d}{k}^r D_n^r(i) \mathbf 1 (I^i_1) } \\
 & = \delta + \sum_{i=n_0+1}^{n} \E[\C{0}{}{}] { \mathbf 1 (\mathcal{E}_{\eps, n_0,n}) D_n^r(i) } \binom{d}{k}^r p^{rk}  \\
 & \stackrel{\eqref{eq:rthmom_degree}}{\leq} \delta + \sum_{i=n_0+1}^{n} C' \cdot \left(\frac{n}{i}\right)^{\frac{r\lambda^\star}{\lambda}} p^{rk},
\end{align*}
for some constant $C' > 0$. Assume first that $p = n^{-\lambda^\star/(k \lambda)} / \omega(n)$ and
$\frac{r\lambda^\star}{\lambda} > 1$. We have
\begin{align}
\label{eqn:late_vertices}
 \Prob[\C{0}{}{}] { (A_1 \setminus A_0) \cap \{n_0+1, \ldots, n \} \neq \emptyset } & \leq \delta + \sum_{i=n_0+1}^{n} C' \cdot \left(\frac{n}{i}\right)^{\frac{r\lambda^\star}{\lambda}} n^{\frac{-r k \lambda^\star}{k \lambda}} (\omega(n))^{-rk} \nonumber \\
 & \leq \delta + C'  (\omega(n))^{-rk} \sum_{i=1}^{\infty} i^{-\frac{r\lambda^\star}{\lambda}} \nonumber \\
 & \leq \delta + C''  (\omega(n))^{-rk}  = \delta + o(1).
\end{align}
Suppose now that $\frac{r\lambda^\star}{\lambda} = 1$ and $p = n^{-\lambda^\star/(k \lambda)} (\log n)^{-1/rk} / \omega(n)$. Following the above argument, we obtain
\begin{align}
\label{eqn:late_vertices_equality}
 \Prob[\C{0}{}{}] { (A_1 \setminus A_0) \cap \{n_0+1, \ldots, n \} \neq \emptyset } & \leq \delta + \sum_{i=n_0+1}^{n} C' \cdot \frac{n}{i} (n \log n)^{-1} (\omega(n))^{-rk} \nonumber \\
 & \leq \delta + C'  (\omega(n))^{-rk} \log^{-1} n \sum_{i=1}^{n} i^{-1} \nonumber \\
 & \leq \delta + C''  (\omega(n))^{-rk}  = \delta + o(1).
\end{align}
By \eqref{eqn:early_vertices} and \eqref{eqn:late_vertices} (or \eqref{eqn:late_vertices_equality}, if $\frac{r\lambda^\star}{\lambda} = 1$), since $\delta > 0$ was arbitrary, we see that no vertex becomes infected in the first step of the process and consequently w.h.p. we do not have percolation.
\end{proof}

\section{Examples}
\label{sec:examples}

The value of the ratio $\frac{r \lambda^\star}{\lambda}$, crucial to Theorem \ref{thm:main}, depends not only on the distribution $\mu$ of the vertex weights, but also on the fitness function $f$. For that reason, designing a universal tool verifying whether $\frac{r \lambda^\star}{\lambda} > 1$ for given families of pairs $(\mu,f)$ appears to be a difficult task. However, in the case of $d$-dimensional Random Apollonian Networks with all $(d-1)$-dimensional faces having equal fitness, say $\gamma > 0$, we have $\lambda = (d-1) \gamma$ and $\lambda = (d-2) \gamma$, so consequently $\frac{r \lambda^\star}{\lambda} = r \frac{d-2}{d-1}$. For $d \geq 4$ this is larger than 1 for all $r \geq 2$, while for $d=3$ the ratio equals $1$ for $r=2$ and is larger than $1$ for all $r \geq 3$, so we can apply Theorem \ref{thm:main} to these models.

Let us consider the following example of model \textbf{B}, where the selected face is removed from further consideration, constituting a generalisation of the Random Apollonian Networks with $d=3$. Assume that we have a two-point distribution of the weights of the vertices of the complex, i.e., that for some $0< \alpha \leq 1$ and $0 < \beta < 1$ we have $\mu(1) = \beta = 1-\mu(\alpha)$, and that the fitness function is $f(x_0,x_1,x_2) = x_0+x_1+x_2$ (note that taking $\alpha = 1$ recovers the Random Apollonian Network model in three dimensions).

This leads to four types of the $2$-dimensional faces: $(\alpha,\alpha,\alpha), (\alpha,\alpha,1), (\alpha,1,1)$, and $(1,1,1)$. Recall from Section \ref{sec:polya_urns} that $\lambda$ is the largest (real) eigenvalue of the matrix $A$ defined as $A_{i,j} = a_i B_{ij}$, where $a_i$ is the fitness of the face of type $i$, and $B_{i,j}$ is the expected number of faces of type $j$ produced by a face of type $i$ upon subdivision (minus 1 if $i=j$ to account for the face that is being subdivided). Hence we have $a_1 = 3\alpha, a_2 = 2\alpha+1, a_3 = \alpha+2$, and $\alpha_4 = 3$, as well as
\begin{align*}
B & = 
\begin{bmatrix}
    3(1-\beta) & 3\beta & 0 & 0 \\
    1-\beta & \beta+2(1-\beta) & 2\beta & 0 \\
    0 & 2(1-\beta) & 2\beta+(1-\beta) & \beta \\
    0 & 0 & 3(1-\beta) & 3\beta
\end{bmatrix}
-I \\ 
& =
\begin{bmatrix}
    2-3\beta & 3\beta & 0 & 0 \\
    1-\beta & 1-\beta & 2\beta & 0 \\
    0 & 2(1-\beta) & \beta & \beta \\
    0 & 0 & 3(1-\beta) & 3\beta-1
\end{bmatrix}.
\end{align*}
Hence, we have
\[
A =
\begin{bmatrix}
    3\alpha(2-3\beta) & 9\alpha\beta & 0 & 0 \\
    (2\alpha+1)(1-\beta) & (2\alpha+1)(1-\beta) & 2(2\alpha+1)\beta & 0 \\
    0 & 2(\alpha+2)(1-\beta) & (\alpha+2)\beta & (\alpha+2)\beta \\
    0 & 0 & 9(1-\beta) & 3(3\beta-1)
\end{bmatrix},
\]
and the largest eigenvalue of $A$ is
\[
 \lambda = \frac{8\alpha+7\beta(1-\alpha)+1+\sqrt{16\alpha^2-8\alpha+1+ \alpha^2 \beta^2 - 2\alpha\beta^2+\beta^2+14\beta+2\alpha\beta-16\alpha^2 \beta}}{2}.
\]
This is a real eigenvalue since for all $0 < \alpha \leq 1$
\begin{align*}
16\alpha^2- 8\alpha+1+ \alpha^2 \beta^2 & - 2\alpha\beta^2+\beta^2+14\beta+2\alpha\beta-16\alpha^2 \beta \\
 & = (4\alpha-1)^2 + \beta^2(\alpha-1)^2+\beta(14+2\alpha-16\alpha^2) \geq 0.
\end{align*}
We make the following claim. 
\begin{clm} \label{clm:lambdas_comp} We have $\lambda_1 \geq \lambda_\alpha$. 
\end{clm}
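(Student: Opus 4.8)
The plan is to realise both $\lambda_1$ and $\lambda_\alpha$ as Perron roots of two generators that differ by an explicitly nonnegative perturbation, and then to compare them through the Collatz--Wielandt characterisation rather than entrywise. For a centre of weight $x\in\{\alpha,1\}$ the $x\star$-process is encoded by a three-colour urn whose colours record the unordered pair of weights of the two non-centre vertices of an active face, namely $\{\alpha,\alpha\}$, $\{\alpha,1\}$ and $\{1,1\}$. Such a face has fitness (activity) $x+2\alpha$, $x+\alpha+1$ and $x+2$ respectively, so the activity vector is $a^{(x)}=(x+2\alpha,\,x+\alpha+1,\,x+2)$. Subdividing a face with a new vertex of weight $w$ replaces the non-centre pair $\{s,t\}$ by the two pairs $\{s,w\},\{t,w\}$ and (in model \textbf{B}) removes the selected face; the resulting replacement matrix
\[
B=\begin{pmatrix} 1-2\beta & 2\beta & 0\\ 1-\beta & 0 & \beta\\ 0 & 2(1-\beta) & 2\beta-1\end{pmatrix}
\]
does \emph{not} depend on $x$, because the transformation of the non-centre pair is insensitive to the weight of the centre. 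By Theorem~\ref{thm:as_convergence} together with Proposition~\ref{prop:MC*}, $\lambda_x$ is the eigenvalue of maximum real part of the generator $A^{(x)}:=\diag(a^{(x)})\,B$.

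The key structural observation is that since $f$ is additive, raising $x$ shifts \emph{every} activity by the same amount: $a^{(1)}=a^{(\alpha)}+(1-\alpha)\mathbf 1$ with $\mathbf 1=(1,1,1)^T$, whence $A^{(1)}=A^{(\alpha)}+(1-\alpha)\,B$. First I would record that each $A^{(x)}$ is an irreducible Metzler matrix: its off-diagonal entries are nonnegative and the digraph $\mathrm{I}\leftrightarrow\mathrm{II}\leftrightarrow\mathrm{III}$ is strongly connected for $0<\alpha\le1$, $0<\beta<1$, so Perron--Frobenius applies and $\lambda_x$ is simple with a strictly positive right eigenvector. Let $v>0$ satisfy $A^{(\alpha)}v=\lambda_\alpha v$. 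From $\diag(a^{(\alpha)})\,Bv=\lambda_\alpha v$ one reads off
\[
Bv=\lambda_\alpha\,\diag(a^{(\alpha)})^{-1}v,
\]
a vector whose $i$th entry is $\lambda_\alpha v_i/a^{(\alpha)}_i>0$. Hence $(1-\alpha)Bv\ge 0$ entrywise and $A^{(1)}v=\lambda_\alpha v+(1-\alpha)Bv\ge\lambda_\alpha v$. Applying the Collatz--Wielandt bound $\lambda_1=\max_{w>0}\min_i (A^{(1)}w)_i/w_i$ to the test vector $v$ then gives $\lambda_1\ge\min_i (A^{(1)}v)_i/v_i\ge\lambda_\alpha$, which is the claim.

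I expect the one genuine subtlety to be that the naive route fails: one cannot conclude $\lambda_1\ge\lambda_\alpha$ from a monotonicity-of-Perron-roots argument applied entrywise, because $A^{(1)}-A^{(\alpha)}=(1-\alpha)B$ is \emph{not} a nonnegative matrix. Its diagonal is $(1-\alpha)(1-2\beta,0,2\beta-1)$, whose first and third entries have opposite signs and switch across $\beta=1/2$, so for $\beta>1/2$ the $(\mathrm I,\mathrm I)$ entry of $A^{(1)}$ is in fact smaller than that of $A^{(\alpha)}$. The point that rescues the argument --- and the only place the specific structure is used --- is that although $B$ is sign-indefinite, its action on the Perron eigenvector $v$ of $A^{(\alpha)}$ is a positive multiple of the strictly positive vector $\diag(a^{(\alpha)})^{-1}v$, which is exactly the nonnegativity that Collatz--Wielandt requires. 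Equivalently, one may differentiate: writing $u,v$ for the left and right Perron eigenvectors, $\tfrac{d}{dx}\lambda_x=u^T B v/(u^T v)=\lambda_x\sum_i u_i v_i/a^{(x)}_i>0$, so $\lambda_x$ is strictly increasing in $x$ and $\lambda_1\ge\lambda_\alpha$, with equality precisely when $\alpha=1$.
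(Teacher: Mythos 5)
Your argument is correct, and it takes a genuinely different route from the paper's. The paper proves the claim probabilistically: it embeds the two $\star$-processes in continuous time as growing ternary trees driven by a common source of randomness, couples them so that the tree of the $\alpha$-centred process is a.s.\ contained in that of the $1$-centred one, and then uses the fact that for this additive $f$ each subdivision increases the total fitness of the leaves to conclude $Z^\star(S_t^{(\alpha)})\leq Z^\star(S_t^{(1)})$ a.s.; comparing the a.s.\ limits of $Z^\star(S_t^{(x)})e^{-\lambda_x t}$ then rules out $\lambda_\alpha>\lambda_1$. You instead work entirely at the level of the generators: you exploit the factorisation $A^{(x)}=\diag(a^{(x)})B$ with a replacement matrix $B$ that is independent of the centre's weight, so that $A^{(1)}-A^{(\alpha)}=(1-\alpha)B$, and you correctly observe that although this perturbation is not entrywise nonnegative, its action on the Perron eigenvector $v$ of $A^{(\alpha)}$ is the nonnegative vector $\lambda_\alpha\,\diag(a^{(\alpha)})^{-1}v$, whence Collatz--Wielandt gives $\lambda_1\geq\lambda_\alpha$. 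The only points worth making explicit are that $\lambda_\alpha>0$ (immediate from $B\mathbf 1=\mathbf 1$, which gives $\lambda_x\geq\min_i a^{(x)}_i>0$, or from Proposition~\ref{prop:MC*}) and that the Collatz--Wielandt formula is applied to the Metzler matrix $A^{(1)}$ after a diagonal shift; both are routine. Your route buys a short, fully rigorous argument where the paper only sketches a coupling, and in addition yields strict monotonicity of $x\mapsto\lambda_x$ with the equality case $\alpha=1$ identified via the derivative formula $\tfrac{d}{dx}\lambda_x=u^TBv/(u^Tv)>0$; it also generalises verbatim to any finite weight support with additive fitness, since the decomposition $a^{(x)}=a^{(x')}+(x-x')\mathbf 1$ persists. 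The paper's coupling, on the other hand, avoids the finite-type linear algebra altogether and only uses that $f$ is increasing in the centre's weight and that subdivision does not decrease total fitness, so it would survive relaxations of exact additivity.
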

\begin{proof}[Sketch of the proof of Claim~\ref{clm:lambdas_comp}] 
Consider a $2$-dimensional simplex $\sigma_0$.
Let $S_0^{(x)}$ and $S_0^{(x')}$ be the simplicial complexes that are the result of the 
subdivision of $\sigma_0$ by a vertex of weight $\alpha$ and $1$, respectively. 
Next, consider two $\star$-processes $(S_t^{(\alpha)})_{t \geq 0}$ and $(S_t^{(1)})_{t\geq 0}$.
embedded into continuous time. 
This means that to each face $\sigma $ a Poisson process of rate $f(\sigma)$ 
is associated up until the moment $\sigma$ is subdivided. 
Moreover, these processes are independent. 
We can represent the evolution of these $\star$-processes by ternary trees. 
Each time a face is subdivided, this event is represented by the birth of two children of the leaf that
corresponds to this face. Only leaves give birth and to each leaf a Poisson point process with rate equal 
to the fitness of the corresponding face is associated. 

Let $\mathcal{T}_t^{\alpha}$ and $\mathcal{T}_t^1$ be the corresponding ternary trees. 
Using a common sequence of subdividing vertices, one can couple them so that a.s. in the coupling space $\mathcal{T}_t^{\alpha} \subset \mathcal{T}_t^1$, for 
all $t\geq 0$. 

 Now, observe that $f$ is such that when a face is subdivided, the total fitness of the two faces it produces is 
 greater than the fitness of the subdivided face. As the total fitness is the total fitness of the faces which correspond 
to leaves, it follows that a.s. 
$Z^{\star} (S_t^{(\alpha)}) \leq Z^{\star}(S_t^{(1)})$. The theory of generalised P\'olya urns shows that 
$Z^{\star} (S_t^{(\alpha)}) e^{-\lambda_{\alpha} t}$ and $Z^{\star} (S_t^{(\alpha)}) e^{-\lambda_{1} t}$ have 
finite a.s. limits. Hence, it cannot be the case that $\lambda_\alpha > \lambda_1$. 

\end{proof}

Hence, to find the value of $\lambda^\star$ we analyse the $1\star$-process. We can have only three types of faces containing a vertex of weight 1, namely $(\alpha,\alpha,1), (\alpha,1,1)$, and $(1,1,1)$, with fitnesses $a_1^* = 2\alpha+1, a_2^* = \alpha+2$, and $a_3^* = 3$ respectively. Remembering that in the companion process we only keep faces that contain the fixed centre of the star, we have
\begin{align*}
B^* & = 
\begin{bmatrix}
    2(1-\beta) & 2\beta & 0 \\
    1-\beta & \beta+(1-\beta) & \beta \\
    0 & 2(1-\beta) & 2\beta
\end{bmatrix}
-I \\ 
& =
\begin{bmatrix}
    1-2\beta & 2\beta & 0 \\
    1-\beta & 0 & \beta \\
    0 & 2(1-\beta) & 2\beta-1
\end{bmatrix},
\end{align*}
and consequently
\[
A^* =
\begin{bmatrix}
    (1+2\alpha)(1-2\beta) & 2(1+2\alpha)\beta & 0 \\
    (2+\alpha)(1-\beta) & 0 & (2+\alpha)\beta \\
    0 & 6(1-\beta) & 3(2\beta-1)
\end{bmatrix}.
\]
The largest eigenvalue of $A^*$ is
\[
 \lambda^\star = 1+2\alpha(1-\beta)+2\beta.
\]
We want to verify whether
\begin{align*}
 \frac{\lambda^\star}{\lambda} & = 
 \frac{2+4\alpha(1-\beta)+4\beta}{8\alpha+7\beta(1-\alpha)+1+\sqrt{16\alpha^2-8\alpha+1+ \alpha^2 \beta^2 - 2\alpha\beta^2+\beta^2+14\beta+2\alpha\beta-16\alpha^2 \beta}} \\
  & \geq \frac{1}{2},
\end{align*}
which is equivalent to testing the condition
\begin{equation}
\label{eqn:ratio_condition}
 \sqrt{16\alpha^2-8\alpha+1+ \alpha^2 \beta^2 - 2\alpha\beta^2+\beta^2+14\beta+2\alpha\beta-16\alpha^2 \beta} \leq 3+\beta-\alpha\beta.
\end{equation}
By taking squares we see that equality in~\eqref{eqn:ratio_condition} can only hold when
\begin{align*}
 0 & = 16\alpha^2 - 16\alpha^2\beta+8\alpha\beta-8\alpha+8\beta-8 = 8(1-\beta)(2\alpha^2-\alpha-1) \\
  & = (1-\beta)(2\beta+1)(1-\alpha),
\end{align*}
i.e., when $\beta=1$, $\alpha=1$, or $\beta = -1/2$. Only the $\alpha=1$ solution lies in our domain, and it corresponds to the Random Apollonian Network model discussed already at the beginning of this section. Since our formulae for $\lambda, \lambda^\star$ are continuous in $\alpha, \beta$, it now suffices to observe that plugging in $\alpha = \beta=0$ into \eqref{eqn:ratio_condition} gives $1 < 3$, and so the strict inequality must hold for all $0 < \alpha, \beta < 1$.

\section{Open problems and further generalisations}
\label{sec:further}

The condition $\tfrac{r \lambda^\star}{\lambda} > 1$ in Theorem~\ref{thm:main} allows us to use a first moment argument to show that if $p \ll p_c$ then w.h.p. the initially infected set is stable, i.e., after the initial infection is seeded, no further infections occur. If this condition does not hold then we can expect the initially infected set to grow, but when is the growth substantial enough to cause percolation?

Another interesting (however quite general) question could ask about the properties of the weight distribution $\mu$ and the fitness function $f$ that ensure good lower bounds on the ratio $\lambda^\star / \lambda$, so that Theorem~\ref{thm:main} can be applied (at least for $r$ large enough). In Section~\ref{sec:examples} we show that for $d=3$, all two-point distributions on $\{\alpha,1\}$, with $\alpha \in (0,1]$, and $f(x,y,z) = x+y+z$, lead to the bound $\lambda^\star / \lambda \geq 1/2$ (with equality if and only if $\alpha = 1$, i.e., the weight distribution has mass on one point only). A more general theory related to this problem would be welcome.

Furthermore, in this paper we assume that $\mu$ has finite support. Thus, for example, in the proof of the 
supercritical case we rely on the appearance of vertices of maximum weight. However, this would not work in the
case where $\mu$ is continuous and has not an atom at the maximum of its support. It seems that in this case one would need a different argument.  

Finally, in our process we only infect a new vertex $v$ if its link contains at least $r$ fully infected \emph{disjoint} $(k-1)$-dimensional faces. What if we drop the requirement that the faces need to be disjoint? To be able to infect the last vertex to arrive, we still need to ask for the condition $\binom{d}{k} \geq r$ to hold (which, for any $k < d$, is a strictly weaker condition than $d \geq r k$). However, allowing the infected faces in the link to intersect introduces a lot of difficulties, the most significant of which appears to be the need to control the rate of growth of the codegrees (recall that the ratio $\lambda^\star / \lambda$ gives the rate of growth of the largest degrees). 
The recent paper of Morrison and Noel~\cite{ar:MorNo2018} treats this model of bootstrap process on hypergraphs which certain regularity conditions on the co-degrees. 
In our context, at the moment this appears to be a challenging task.

\bibliographystyle{plain}

\end{document}